\DeclareFontFamily{U}{shuffle}{}
\DeclareFontShape{U}{shuffle}{m}{n}{ <-8>shuffle7 <8->shuffle10}{}
\newcommand{\CMZV}{\mathsf{CMZV}}
\newcommand{\sha}{\shuffle}
\newcommand{\ola}{\overleftarrow}
\newcommand{\ora}{\overrightarrow}
\newcommand\tx{{\texttt{x}}}
\newcommand\ty{{\texttt{y}}}
\newcommand\tz{{\texttt{z}}}
\newcommand\ttx{{\emph{\texttt{x}}}}
\newcommand{\yi}{{1}}
\newcommand\om{{\omega}}
\newcommand\eps{{\varepsilon}}
\newcommand{\bfk}{{\boldsymbol{\sl{k}}}}
\newcommand{\bfm}{{\boldsymbol{\sl{m}}}}
\newcommand{\bfz}{{\boldsymbol{\sl{z}}}}
\newcommand\bfsi{{\boldsymbol \sigma}}
\def\int{\displaystyle\!int}
\def\lim{\displaystyle\!lim}
\def\sum{\displaystyle\!sum}
\def\sup{\displaystyle\!sup}
\def\inf{\displaystyle\!inf}
\def\cap{\displaystyle\!cap}
\def\max{\displaystyle\!max}
\def\min{\displaystyle\!min}
\def\frac{\displaystyle\!frac}
\let\oldsection\section
\renewcommand\section{\setcounter{equation}{0}\oldsection}
\DeclareMathOperator*{\dep}{dep}
\DeclareMathOperator{\Li}{Li}
\DeclareMathOperator{\ti}{ti}
\def\N{\mathbb{N}}
\def\Z{\mathbb{Z}}
\def\su{\sum\limits_{n=1}^\infty}
\def\ze{\zeta}
\theoremstyle{plain}
\newtheorem{thm}{Theorem}[section]
\newtheorem{lem}[thm]{Lemma}
\newtheorem{cor}[thm]{Corollary}
\newtheorem{pro}[thm]{Proposition}
\theoremstyle{definition}
\newtheorem{defn}{Definition}[section]
\newtheorem{re}[thm]{Remark}
\newtheorem{exa}[thm]{Example}
\begin{document}
\title{\bf Explicit Evaluation of Euler-Ap\'ery Type Multiple Zeta Star Values and Multiple $t$-Star Values}
\author{
{Ce Xu${}^{a,}$\thanks{Email: cexu2020@ahnu.edu.cn} and Jianqiang Zhao${}^{b,}$\thanks{Email: zhaoj@ihes.fr}}\\[1mm]
\small a. School of Mathematics and Statistics, Anhui Normal University, Wuhu 241002, PRC\\
\small b. Department of Mathematics, The Bishop's School, La Jolla, CA 92037, USA}

\date{}
\maketitle

\noindent{\bf Abstract.} In this paper we establish several recurrence relations about Euler-Ap\'ery type multiple zeta star values and a parametric variant of it by using the method of iterated integrals. Then using the formulas obtained, we find the explicit evaluations for some specific Euler-Ap\'ery type multiple zeta star values and one of its parametric variant, and Euler-Ap\'ery type multiple $t$-star values.

\medskip
\noindent{\bf Keywords}: (alternating) multiple zeta (star) values; multiple $t$-(star) values; colored multiple zeta values; iterated integrals; central binomial coefficients; multiple polylogarithm function.

\medskip
\noindent{\bf AMS Subject Classifications (2020):} 11M06, 11M32.

\section{Definition}
We begin with some basic notations. Let $\Z$ and $\N$ be the set of integers and positive integers, respectively.  A finite sequence $\bfk:= (k_1,\ldots, k_r)\in\N^r$ is called a \emph{composition}. We put
\[|\bfk|:=k_1+\cdots+k_r,\quad \dep(\bfk):=r,\]
and call them the weight and the depth of $\bfk$, respectively. If $k_1>1$, $\bfk$ is called \emph{admissible}.

For a composition $\bfk=(k_1,\ldots,k_r)$ and positive integer $n$, the \emph{multiple harmonic sums} (MHSs) and \emph{multiple harmonic star sums} (MHSSs) are defined by
\begin{align}
\zeta_n(\bfk):=\sum\limits_{n\geq n_1>\cdots>n_r>0 } \frac{1}{n_1^{k_1}\cdots n_r^{k_r}}\quad
\text{and}\quad
\zeta^\star_n(\bfk):=\sum\limits_{n\geq n_1\geq\cdots\geq n_r>0} \frac{1}{n_1^{k_1}\cdots n_r^{k_r}}\label{MHSs+MHSSs},
\end{align}
respectively. If $n<k$ then ${\zeta_n}(\bfk):=0$ and ${\zeta _n}(\emptyset )={\zeta^\star _n}(\emptyset ):=1$. As special cases,
$$
H_n:= \ze_n(1)=\ze^\star_n(1)  \quad  \text{and}  \quad H_n^{(k)}:=\ze_n(k)=\ze^\star_n(k)
$$
are the \emph{classical} and \emph{generalized harmonic numbers}, respectively. When taking the limit $n\rightarrow \infty$ in \eqref{MHSs+MHSSs} we get the so-called  \emph{multiple zeta values} (MZVs) and the \emph{multiple zeta star values} (MZSVs), respectively:
\begin{align*}
{\zeta}( \bfk):=\lim_{n\rightarrow \infty}{\zeta _n}(\bfk), \quad
\text{and}\quad
{\zeta^\star}( \bfk):=\lim_{n\rightarrow \infty}{\zeta^\star_n}( \bfk),
\end{align*}
defined for admissible compositions  $\bfk$ to ensure convergence of the series. The systematic study of MZVs began in the early 1990s with the works of Hoffman \cite{H1992} and Zagier \cite{DZ1994}. Due to their surprising and sometimes mysterious appearance in the study of many branches of mathematics and theoretical physics, these special values have attracted a lot of attention and interest in the past three decades (for example, see the book by the
second author  \cite{Zhao2016}).

In \cite{H2019}, Hoffman introduced and studied odd variants of MZVs and MZSVs, which are
defined for an admissible composition $\bfk=(k_1,k_2,\ldots,k_r)$ by
\begin{align}\label{defn-mtvs}
t(\bfk):=\sum_{n_1>n_2>\cdots>n_r>0} \prod_{j=1}^r \frac{1}{(2n_{j}-1)^{k_{j}}}\quad
\text{and}\quad
t^\star(\bfk):=\sum_{n_1\geq n_2\geq \cdots\geq n_r>0} \prod_{j=1}^r \frac{1}{(2n_{j}-1)^{k_{j}}},
\end{align}
and are called a \emph{multiple $t$-value} and \emph{multiple $t$-star value}, respectively.

Then, similar to multiple harmonic sums and multiple harmonic star sums, for a composition $\bfk=(k_1,\ldots,k_r)$ and positive integer $n$, we define the \emph{multiple t-harmonic sums} and \emph{multiple t-harmonic star sums} respectively by
\begin{align*}
t_n(\bfk):=\sum_{n\geq n_1>n_2>\cdots>n_r>0} \prod_{j=1}^r \frac{1}{(2n_{j}-1)^{k_{j}}}\quad
\text{and}\quad
t^\star_n(\bfk):=\sum_{n\geq n_1\geq n_2\geq \cdots\geq n_r>0} \prod_{j=1}^r \frac{1}{(2n_{j}-1)^{k_{j}}}.
\end{align*}

In general, for any ${\bfk}=(k_1,\dotsc,k_r)\in\N^r$ and $N$th roots of unity $z_1,\dotsc,z_r$, we can define the \emph{colored MZVs} (CMZVs) of level $N$ as
\begin{equation}\label{equ:defnMPL}
\Li_{k_1,\dotsc,k_r}(z_1,\dotsc,z_r):=\sum_{n_1>\cdots>n_r>0}
\frac{z_1^{n_1}\dots z_r^{n_r}}{n_1^{k_1} \dots n_r^{k_r}}
\end{equation}
which converges if $(s_1,z_1)\ne (1,1)$ (see \cite[Ch. 15]{Zhao2016}), in which case we call $({\bf s};{\bf z})$ \emph{admissible}. In particular, if all $z_j\in \{\pm 1\}$ in \eqref{equ:classicalLi}, then the level two colored MZVs are called \emph{alternating MZVs} (or \emph{Euler sums}). In this case, namely, when $(z_1,\ldots,z_r)\in \{\pm 1\}^r$ and $(k_1,z_1)\neq (1,1)$, we set $\ze(k_1,\ldots,k_r;z_1,\ldots,z_r)=\Li_{k_1,\dotsc,k_r}(z_1,\dotsc,z_r)$. Further, we put a bar on top of
$k_j$ if $z_j=-1$. For example,
\begin{equation*}
\zeta(\bar 2,3,\bar 1,4)=\zeta(  2,3,1,4;-1,1,-1,1).
\end{equation*}
More generally, for any composition $(k_1,\dotsc,k_r)\in\N^r$, the \emph{classical multiple polylogarithm function} with $r$-variable is defined by
\begin{align}\label{equ:classicalLi}
\Li_{k_1,\dotsc,k_r}(x_1,\dotsc,x_r):=\sum_{n_1>n_2>\cdots>n_r>0} \frac{x_1^{n_1}\dotsm x_r^{n_r}}{n_1^{k_1}\dotsm n_r^{k_r}}
\end{align}
which converges if $|x_j\cdots x_r|<1$ for all $j=1,\dotsc,r$. They can be analytically continued to a multi-valued meromorphic function on $\mathbb{C}^r$ (see \cite{Zhao2007d}). In particular, if $x_1=x,x_2=\cdots=x_r=1$, then $\Li_{k_1,\ldots,k_r}(x,\yi_{r-1})$ is the classical multiple polylogarithm function with single-variable. As a convention, we denote by $\yi_d$ the sequence of 1's with $d$ repetitions. We also define the \emph{multiple t-polylogarithm function} for any composition $\bfk=(k_1,\ldots,k_r)$ by
\begin{align*}
\ti_{\bfk}(x)&:=\sum_{n_1>n_2>\cdots>n_r>0} \frac{x^{2n_1-1}}{(2n_1-1)^{k_1}(2n_2-1)^{k_2}\ldots (2n_r-1)^{k_r}}\quad \nonumber\\
&=\int_0^x \frac{dt}{1-t^2}\left(\frac{dt}{t}\right)^{k_r-1} \frac{tdt}{1-t^2}\left(\frac{dt}{t}\right)^{k_{r-1}-1}\cdots \frac{tdt}{1-t^2}\left(\frac{dt}{t}\right)^{k_1-1},
\end{align*}
where $|x|\le 1$ with $(k_1,x)\ne (1,1)$. Clearly, $t(\bfk;1)=t(\bfk)$ with $k_1\geq 2$.

Motivated by \cite{Au2020}, Xu and Wang \cite{WX2021} studied some Euler-Ap\'{e}ry type series of the form
\begin{equation}\label{EAS.def}
\sum_{n=1}^\infty
    \frac{H_n^{(k_1)}H_n^{(k_2)}\cdots H_n^{(k_r)}}{n^p}a_n^{\pm 1}
\end{equation}
where for $a_n=\binom{2n}{n}/4^n$. For these and some other similar series they established
the corresponding explicit formulas in terms of the alternating MZVs.
In particular, they discovered a few elegant explicit formulas for the series
\begin{equation*}
\sum_{n=1}^{\infty}\frac{a_n}{n^p}\,,\quad
\sum_{n=1}^{\infty}\frac{a_n H_n^{(m)}}{n^p}\,,\quad
\sum_{n=1}^{\infty}\frac{a_nH_nH_n^{(m)}}{n^p}\,,\quad
\sum_{n=1}^{\infty}\frac{a_nH_n^3}{n^p}\,,
\end{equation*}
and
\begin{equation*}
\sum_{n=1}^\infty\frac{a_n\ze_n^\star(1_m)}{n^p}\,,\quad
\sum_{n=1}^\infty\frac{a_nH_n\ze_n^\star(1_m)}{n^p}\,,
\end{equation*}
for $m,p\geq1$, by using the method of iterated integrals. They also
found some expressions of the Euler-Ap\'{e}ry type series
\begin{equation*}
\sum_{n=1}^\infty\frac{a_n^{-1}}{n^p}\,,\quad
\sum_{n=1}^\infty\frac{a_n^{-1}H_n}{n^p}\,,\quad
\sum_{n=1}^\infty\frac{a_n^{-1}H_{2n}}{n^p}\,,\quad
\sum_{n=1}^\infty\frac{a_n^{-1}O_n}{n^p}\,,
\end{equation*}
for $p\geq 2$, and
\begin{equation*}
\sum_{n=1}^{\infty}\frac{a_n}{n^p}\,,\quad
\sum_{n=1}^\infty\frac{a_nH_{2n}}{n^p}\,,\quad
\sum_{n=1}^\infty\frac{a_nO_n}{n^p}\,,\quad
\end{equation*}
for $p\geq 1$, by computing the contour integrals related to gamma functions, polygamma functions and
trigonometric functions. Here $O_n=\sum_{k=1}^n\frac{1}{2k-1}$ are the classical \emph{odd harmonic numbers}. Obviously, by applying the stuffle relations, also called quasi-shuffle relations (see
\cite{Hoffman2000}), we know
that for any composition $\bfk=(k_1,\ldots,k_r)$, the product $H_n^{(k_1)}\cdots H_n^{(k_r)}$ can be expressed in terms of a linear combination of multiple harmonic (star) sums (for the explicit formula, see \cite[Eq. (2.4)]{WX2020}). For example
\begin{equation*}
H_nH_n^{(2)}=\ze_n(1)\ze_n(2)=\ze_n(1,2)+\ze_n(2,1)+\ze_n(3).
\end{equation*}
Hence, we can study the Euler-Ap\'ery type MZVs
\begin{equation*}
\sum_{n=1}^\infty \frac{\zeta_n(k_2,\ldots.k_r)}{n^{k_1}}a_n^{\pm 1}
\end{equation*}
to obtain some explicit evaluations of \eqref{EAS.def}. Recently, Au \cite{Au2020} proved the result that for ${\bfk}=(k_1,\ldots,k_r)\in \N^r$, the Euler-Ap\'ery type MZVs above can be expressed in terms of alternating {\rm MZVs} (but he did not give a general explicit formula), namely,
\begin{align*}
\sum_{n=1}^\infty \frac{\zeta_n(k_2,\ldots.k_r)}{n^{k_1}}a_n^{\pm 1} \in \CMZV^2_{|{\bfk}|}.
\end{align*}
Therefore, the Euler-Ap\'{e}ry type series \eqref{EAS.def} can be evaluated by alternating MZVs. Some related results may be found in \cite{Camp18,ChenKW19,KWY2007,X2020} and references therein. Further, Au showed that
\begin{align*}
\sum_{n=1}^\infty \frac{\zeta_n(k_2,\ldots.k_r)}{n^{k_1}}a_n^{-2} \in \CMZV^4_{|{\bfk}|},\quad \sum_{n=1}^\infty \frac{\zeta_n(k_2,\ldots.k_r)}{n^{k_1}}a_n^2 \in \frac{1}{\pi}\CMZV^4_{|{\bfk}|+1},
\end{align*}
where $\CMZV^N_n$ is the $\mathbb{Q}$-span of CMZVs of weight $n$ and level $N$.

In this paper, we will study the following Euler-Ap\'ery type MZSVs and MtSVs
\begin{align}
&\sum\limits_{n=1}^\infty \frac{\zeta_n^\star(k_2,\ldots,k_r)}{n^{k_1}}a_n=\sum\limits_{n=1}^\infty \frac{\zeta_n^\star(k_2,\ldots,k_r)}{n^{k_1}4^{n}}\binom{2n}{n},\label{EAT-MZSVs-def}\\
&\sum\limits_{n=1}^\infty \frac{t_n^\star(k_2,\ldots,k_r)}{n^{k_1}}a_n=\sum\limits_{n=1}^\infty \frac{t_n^\star(k_2,\ldots,k_r)}{n^{k_1}4^{n}}\binom{2n}{n},\label{EAT-MtSVs-def}
\end{align}
and a parametric variant of \eqref{EAT-MZSVs-def}
\begin{align}\label{EAT-MZSVs-def2}
\sum\limits_{n=1}^\infty \frac{\zeta_n^\star(k_2,\ldots,k_r;x)}{n^{k_1}}a_n=
\sum\limits_{n=1}^\infty \frac{\zeta_n^\star(k_2,\ldots,k_r;x)}{n^{k_1}4^{n}}\binom{2n}{n},
\end{align}
where the \emph{parametric multiple harmonic star sum} (PMHSS) $\zeta^\star_n(k_1,\cdots,k_r;x)$ is defined by
\begin{align*}
\zeta_n^\star(k_1,\cdots,k_r;x):=\sum\limits_{n\geq n_1\geq \cdots \geq n_r\geq 1}\frac{x^{n_r}}{n^{k_1}_1\cdots n^{k_r}_r}\quad\text{and}\quad \zeta^\star_n(\emptyset;x):=x^n.
\end{align*}

The primary goal of this paper is to study the explicit relations of \eqref{EAT-MZSVs-def} and \eqref{EAT-MZSVs-def2}.
We will use the method of iterated integrals to obtain some recurrence relations of \eqref{EAT-MZSVs-def} and \eqref{EAT-MZSVs-def2},
which in turn will lead to some explicit evaluations of \eqref{EAT-MZSVs-def} and \eqref{EAT-MZSVs-def2}.

\section{Euler-Ap\'ery type MZSVs and its parametric variant}\label{EATMZVs}
The theory iterated integrals was developed first by K.T. Chen in the 1960's \cite{KTChen1971,KTChen1977}. It has played important roles in the study of algebraic topology and algebraic geometry in past half century. Its simplest form is
$$\int_{a}^b f_p(t)dt \cdots f_1(t)dt:=\int\limits_{a<t_p<\cdots<t_1<b}f_p(t_p) \cdots f_1(t_1)\, dt_1 \cdots dt_p.$$
In this section, we use the iterated integrals to establish two recurrence relations of \eqref{EAT-MZSVs-def} and \eqref{EAT-MZSVs-def2}. First, by \cite[Eqs. (3.1) and (3.2)]{XuZhao2020b}, we have the iterated integral expression
\begin{align}\label{equ:glInteratedInt}
\Li_{k_1,\ldots,k_r}(x_1,x_2/x_1\dotsc,x_r/x_{r-1})= \int_0^1 \left(\frac{x_r\, dt}{1-x_rt}\right)\left(\frac{dt}{t}\right)^{k_r-1}\cdots
\left(\frac{x_1\, dt}{1-x_1 t}\right)\left(\frac{dt}{t}\right)^{k_1-1}.
\end{align}
In particular, CMZVs can be expressed using iterated integrals
\begin{equation}\label{czeta-itIntExpression}
\Li_{\bfk}(\bfz)=\int_0^1 \tx_{\xi_r}\tx_0^{k_r-1}\cdots\tx_{\xi_1}\tx_0^{k_1-1},
\end{equation}
where $\xi_j:=\prod_{i=1}^j z_i^{-1}$,  and $\tx_\xi=dt/(\xi-t)$
for any $N$th roots of unity $\xi$, see \cite[Sec.~2.1]{Zhao2016} for a brief summary of this theory.

To save space, for any composition $\bfm=(m_1,\dotsc,m_p)\in\N^p$ and $i,j\in\N$, we put
\begin{align*}
&{\ora\bfm}_{\hskip-1pt i,j}:=
\left\{
  \begin{array}{ll}
    (m_i,\ldots,m_{j}), \quad \ & \hbox{if $i\le j\le p$;} \\
    \emptyset, & \hbox{if $i>j$,}
  \end{array}
\right.
 \quad &\ola\bfm_{\hskip-1pt i,j}:=
\left\{
  \begin{array}{ll}
     (m_{j},\ldots,m_i), \quad\ & \hbox{if $i\le j\le p$;} \\
     \emptyset, & \hbox{if $i>j$.}
  \end{array}
\right.
\end{align*}
Set $\ora\bfm_{\hskip-1pt i}=\ora\bfm_{\hskip-1pt 1,i}=(m_1,\ldots,m_i)$ and $\ola\bfm_{\hskip-1pt i}=\ola\bfm_{\hskip-1pt i,p}=(m_p,\ldots,m_i)$ for all $1\le i\le p$. Further set
\begin{align*}
\bfm_-=(m_1,\dotsc,m_p-1) \quad\text{if $m_p>1$, and}\quad \bfm_+=(m_1,\dotsc,m_p+1).
\end{align*}

The Hoffman dual of a composition $\bfm=(m_1,\ldots,m_p)$ is $\bfm^\vee=(m'_1,\ldots,m'_{p'})$ determined by
$|\bfm|:=m_1+\cdots+m_p=m'_1+\cdots+m'_{p'}$ and
\begin{equation*}
\{1,2,\ldots,|\bfm|-1\}
=\Big\{ \sideset{}{_{i=1}^{j}}\sum m_i\Big\}_{j=1}^{p-1}
 \coprod \Big\{ \sideset{}{_{i=1}^{j}}\sum  m_i'\Big\}_{j=1}^{p'-1}.
\end{equation*}
Equivalently, $\bfm^\vee$ can be obtained from $\bfm$ by swapping the commas ``,'' and the plus signs ``+'' in the expression
\begin{equation*}
 \bfm=(\underbrace{1+\cdots+1}_{\text{$m_1$ times}},\dotsc,\underbrace{1+\cdots+1}_{\text{$m_p$ times}}).
\end{equation*}
For example, we have
$({1,1,2,1})^\vee=(3,2)$ and $({1,2,1,1})^\vee=(2,3).$ More general, we have
\begin{align}\label{eq:HdualDef}
{\bfm}^\vee=(\underbrace{1,\ldots,1}_{m_1}+\underbrace{1,\ldots,1}_{m_2}+1,\ldots,1+\underbrace{1,\ldots,1}_{m_p}).
\end{align}
Put $\tx_1=dt/(1-t)$ and $\tx_0=dt/t$. Concerning this duality,
from \cite[Eq. (2.8)]{XuZhao2021a}, we have the iterated integral expression
\begin{equation}\label{IMP-2}
\zeta_n^\star(\bfm^\vee;x)
-\sum\limits_{j=1}^p (-1)^{p-j}\zeta^\star_n(\ora\bfm_j^\vee){\Li}_{\ola\bfm_{j+1}}(1-x)
=n(-1)^{p}\int_x^1\tx_1^{m_p-1}\tx_0 \cdots  \tx_1^{m_1}\tx_0t^{n-1}dt.
\end{equation}

To state our result, we need the following lemmas. To save space, we set  $\tx_\xi:=\frac{dt}{\xi-t}\quad (\xi\neq 0)$, $\ty=\tx_{-i}+\tx_{i}-\tx_{-1}-\tx_{1}$, $\tz=-\tx_0-\tx_{-i}-\tx_{i}$, $\om_0:=\frac{dt}{1-t^2}$ and $\om_1:=\frac{tdt}{1-t^2}$.

\begin{lem}\label{thm-ItI1} \emph{(cf.\ \cite[Thm. 2.1]{XuZhao2021a})}
For any $n,p\in\N$, $\bfm=(m_1,\ldots,m_p)\in\N^p$ and $x\in[0,1]$,
\begin{align}\label{FII1}
n\int_0^x t^{n-1}dt \ttx_1\ttx_0^{m_1-1}\cdots \ttx_1\ttx_0^{m_p-1}
&=(-1)^p\ze^\star_n(\ora\bfm;x)- \sum_{j=1}^{p}(-1)^{j} \ze^\star_n(\ora\bfm_{j-1})\Li_{\ola\bfm_{j}}(x).
\end{align}
\end{lem}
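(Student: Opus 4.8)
The plan is to argue by induction on the weight $w:=|\bfm|=m_1+\cdots+m_p$, differentiating in the parameter $x$ so as to peel off the outermost (rightmost) one-form of the iterated integral. Denote by $I_\bfm(x)$ the left-hand side and by $R_\bfm(x)$ the right-hand side of \eqref{FII1}. When $w=0$ we have $p=0$ and both sides equal $n\int_0^x t^{n-1}\,dt=x^n=\ze^\star_n(\emptyset;x)=R_\emptyset(x)$, using the stated convention $\ze^\star_n(\emptyset;x)=x^n$; this is the base case. For $p\ge1$ both sides vanish at $x=0$, since the iterated integral over an empty range is $0$, while each summand of $R_\bfm(x)$ carries a positive power of $x$ coming from the innermost index of $\ze^\star_n(\ora\bfm;x)$ or the outermost index of $\Li_{\ola\bfm_j}(x)$. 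By the fundamental theorem of calculus it therefore suffices to show that $I_\bfm$ and $R_\bfm$ satisfy the same first-order recursion in $x$.

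Differentiating $I_\bfm$ strips its rightmost factor. If $m_p\ge2$ that factor is $\ttx_0=dt/t$, so $I_\bfm'(x)=I_{\bfm_-}(x)/x$ with $\bfm_-=(m_1,\dots,m_{p-1},m_p-1)$; if $m_p=1$ it is $\ttx_1=dt/(1-t)$, so $I_\bfm'(x)=I_{\ora\bfm_{p-1}}(x)/(1-x)$, where $\ora\bfm_{p-1}=(m_1,\dots,m_{p-1})$. Both reductions strictly lower the weight, so the inductive hypothesis identifies $I_{\bfm_-}=R_{\bfm_-}$ and $I_{\ora\bfm_{p-1}}=R_{\ora\bfm_{p-1}}$. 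On the right-hand side, all the required $x$-derivatives follow at once from the series definitions, because $x$ sits on the innermost index in $\ze^\star_n(\ora\bfm;x)$ and on the outermost index in each $\Li_{\ola\bfm_j}(x)$. For $m_p\ge2$ every term simply acquires a factor $1/x$ with its leading exponent $m_p$ lowered by one (the coefficients $\ze^\star_n(\ora\bfm_{j-1})$ being independent of $x$), and these reassemble exactly into $R_{\bfm_-}(x)/x$; this settles the case $m_p\ge2$.

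The delicate case is $m_p=1$. Here differentiation produces factors $1/(1-x)$: from $\sum_{n_p=1}^{n_{p-1}}x^{n_p-1}=(1-x^{n_{p-1}})/(1-x)$ one gets $\tfrac{d}{dx}\ze^\star_n(\ora\bfm;x)=\big(\ze^\star_n(\ora\bfm_{p-1})-\ze^\star_n(\ora\bfm_{p-1};x)\big)/(1-x)$, and from $\sum_{b_1>b_2}x^{b_1-1}=x^{b_2}/(1-x)$ one gets $\tfrac{d}{dx}\Li_{\ola\bfm_j}(x)=\Li_{\ola\bfm_{j,p-1}}(x)/(1-x)$. Multiplying $R_\bfm'(x)$ by $1-x$ then leaves $(-1)^p\ze^\star_n(\ora\bfm_{p-1})-(-1)^p\ze^\star_n(\ora\bfm_{p-1};x)-\sum_{j=1}^p(-1)^j\ze^\star_n(\ora\bfm_{j-1})\Li_{\ola\bfm_{j,p-1}}(x)$. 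I expect the crux of the whole proof to be the bookkeeping at this step: the $j=p$ summand uses the convention $\ola\bfm_{p,p-1}=\emptyset$, hence $\Li_{\ola\bfm_{p,p-1}}=1$ and this summand equals $-(-1)^p\ze^\star_n(\ora\bfm_{p-1})$, which cancels the boundary term $(-1)^p\ze^\star_n(\ora\bfm_{p-1})$ produced by differentiating $\ze^\star_n(\ora\bfm;x)$. After this cancellation and the sign change $-(-1)^p=(-1)^{p-1}$, the remaining terms collapse precisely to $R_{\ora\bfm_{p-1}}(x)$, matching $I_\bfm'(x)=R_{\ora\bfm_{p-1}}(x)/(1-x)$ via the inductive hypothesis. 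Tracking these signs and the empty-composition convention correctly, rather than any analytic difficulty, is the main obstacle; with it in hand the induction closes.
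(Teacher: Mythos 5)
Your proof is correct, and it is worth noting at the outset that the paper itself gives no argument for this lemma at all: it is imported verbatim (``cf.'' \cite[Thm.~2.1]{XuZhao2021a}), so your blind attempt is being compared against a citation rather than an in-paper proof. Your induction on the weight $|\bfm|$, peeling the rightmost one-form by differentiating in $x$, is the natural route and matches the style of proof used in the cited source. I checked the delicate points and they all hold: the paper's ordering convention does put the rightmost form at the variable nearest $x$, so $I_\bfm'(x)=I_{\bfm_-}(x)/x$ when $m_p\ge 2$ and $I_\bfm'(x)=I_{\ora\bfm_{p-1}}(x)/(1-x)$ when $m_p=1$; the series differentiations $\tfrac{d}{dx}\ze^\star_n(\ora\bfm;x)=\bigl(\ze^\star_n(\ora\bfm_{p-1})-\ze^\star_n(\ora\bfm_{p-1};x)\bigr)/(1-x)$ and $\tfrac{d}{dx}\Li_{\ola\bfm_j}(x)=\Li_{\ola\bfm_{j,p-1}}(x)/(1-x)$ are right, with $x$ sitting on the innermost index of the star sum and the outermost index of the polylogarithm exactly as you say; and the $j=p$ term $\Li_{\ola\bfm_{p,p-1}}=\Li_\emptyset=1$ does cancel the boundary term $(-1)^p\ze^\star_n(\ora\bfm_{p-1})$, after which the sign flip $-(-1)^p=(-1)^{p-1}$ reassembles $R_{\ora\bfm_{p-1}}(x)$, closing the induction (I verified the cases $\bfm=(1)$, $(2)$, $(1,1)$ directly against both sides). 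Two harmless loose ends you could tighten: the lemma states $p\in\N$ but your base case is $p=0$, so you are silently proving the slightly stronger statement with the convention $\ze^\star_n(\emptyset;x)=x^n$ (which the paper does define, so this is fine); and the term-by-term differentiation is justified on $[0,1)$ with the endpoint $x=1$ recovered by Abel continuity when both sides converge --- when $m_p=1$ both sides genuinely diverge at $x=1$, a point the statement itself also glosses over, so no fault attaches to you there.
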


\begin{lem}\label{thm-ItI11} \emph{(cf.\ \cite[Thm. 3.6]{XuZhao2021a})}
For any $n,p\in\N$, $\bfm=(m_1,\ldots,m_p)\in\N^p$ and $x\in[0,1]$,
\begin{align}\label{FIIt1}
2n \int_0^x t^{2n-1}dt \om_0 \ttx_0^{m_1-1} \om_1 \ttx_0^{m_2-1} \cdots \om_1\ttx_0^{m_p-1}
=(-1)^p t^\star_n(\bfm;x)-\sum_{j=1}^p (-1)^{j} t^\star_n(\ora\bfm_{\hskip-2pt j-1})\ti_{\ola\bfm_{\hskip-2pt j}}(x),
\end{align}
where
\begin{align}\label{equ:t-mhss}
t^\star_n(\bfk;x):=\sum_{n\geq n_1\geq n_2\geq \cdots \geq n_r\geq 1} \frac{x^{2n_r-1}}{(2n_1-1)^{k_1}(2n_2-1)^{k_2}\ldots(2n_r-1)^{k_r}}.
\end{align}

\end{lem}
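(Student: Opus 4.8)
The plan is to prove \eqref{FIIt1} by induction on the weight $|\bfm|=m_1+\cdots+m_p$, showing that both sides, viewed as functions of $x$, satisfy the same first-order recursion in $x$ together with the same value at $x=0$. Write $L_p(\bfm;x)$ for the left-hand side and $R_p(\bfm;x)$ for the right-hand side of \eqref{FIIt1}. Both vanish at $x=0$: the integral gives $L_p(\bfm;0)=0$ trivially, while every term of $R_p$ carries either a factor $x^{2n_r-1}$ (through $t_n^\star(\cdot\,;x)$) or a value $\ti_{\ola\bfm_j}(0)=0$, so $R_p(\bfm;0)=0$ as well. For the base case $p=1$ I expand the iterated integral directly: integrating $\om_0=\sum_{k\ge0}t^{2k}\,dt$ against $t^{2n-1}\,dt$ and then applying $\tx_0=dt/t$ exactly $m_1-1$ times yields $L_1(\bfm;x)=\sum_{l>n}x^{2l-1}/(2l-1)^{m_1}$, which is precisely $\ti_{m_1}(x)-t_n^\star(m_1;x)=R_1(\bfm;x)$.

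For the inductive step I differentiate in $x$ and peel off the outermost (rightmost) one-form of the iterated integral, distinguishing two cases according to $m_p$. If $m_p\ge2$ the outermost form is $\tx_0=dt/t$, so
\begin{equation*}
\frac{d}{dx}L_p(\bfm;x)=\frac1x\,L_p(\bfm_-;x),
\end{equation*}
whereas if $m_p=1$ the outermost form is $\om_1=t\,dt/(1-t^2)$, so
\begin{equation*}
\frac{d}{dx}L_p(\bfm;x)=\frac{x}{1-x^2}\,L_{p-1}(\ora\bfm_{p-1};x).
\end{equation*}
In either case the reduced composition has weight $|\bfm|-1$. Hence, once I show that $R_p$ obeys the identical two recursions, the induction hypothesis $L=R$ in lower weight together with $L_p(\bfm;0)=R_p(\bfm;0)=0$ forces $L_p(\bfm;x)=R_p(\bfm;x)$ on $[0,1)$, and the identity extends to $x=1$ by continuity in the admissible case $m_p\ge2$.

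It remains to check that $R_p$ satisfies the same recursions, which is where the real work lies. The needed derivative formulas are elementary: for $m_p\ge2$ one has $\frac{d}{dx}t_n^\star(\bfm;x)=\frac1x\,t_n^\star(\bfm_-;x)$, and differentiating each $\ti_{\ola\bfm_j}(x)$ likewise lowers its leading index $m_p$ by one. Substituting these into $R_p$, and observing that the weights $t_n^\star(\ora\bfm_{j-1})$ do not involve $m_p$ and so are unchanged under $\bfm\mapsto\bfm_-$, collapses $\frac{d}{dx}R_p(\bfm;x)$ directly to $\frac1x R_p(\bfm_-;x)$. The delicate case is $m_p=1$, where differentiation cancels the factor $(2n_p-1)$ in $t_n^\star(\bfm;x)$ and the resulting inner geometric sum $\sum_{n_p=1}^{n_{p-1}}x^{2n_p-2}=(1-x^{2n_{p-1}})/(1-x^2)$ produces
\begin{equation*}
\frac{d}{dx}t_n^\star(\bfm;x)=\frac{1}{1-x^2}\Big(t_n^\star(\ora\bfm_{p-1})-x\,t_n^\star(\ora\bfm_{p-1};x)\Big).
\end{equation*}
The $x$-independent term $t_n^\star(\ora\bfm_{p-1})/(1-x^2)$ cancels exactly the contribution of the boundary index $j=p$, for which $\ti_{\ola\bfm_p}(x)=\ti_1(x)$ and $\frac{d}{dx}\ti_1(x)=1/(1-x^2)$; the surviving $\ti$'s are the reversed truncations $\ti_{(m_{p-1},\ldots,m_j)}(x)$ of $\ora\bfm_{p-1}$, and the terms reorganize into exactly $\frac{x}{1-x^2}R_{p-1}(\ora\bfm_{p-1};x)$.

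I expect the main obstacle to be precisely this bookkeeping in the $m_p=1$ case: tracking the reversed sub-compositions $\ola\bfm_j$, verifying that the $x$-independent pieces telescope against the $j=p$ boundary term, and confirming that the sign shifts from $(-1)^{p}$ to $(-1)^{p-1}$ so that the remaining sum is genuinely $R_{p-1}$ of the truncation $\ora\bfm_{p-1}$. A secondary technical point is the passage to $x=1$: since $t_n^\star(\bfm;x)$ is a polynomial in $x$ while $L_p(\bfm;x)$ converges at $x=1$ only when $m_p\ge2$, the endpoint identity is best stated under that admissibility hypothesis (or recovered from $[0,1)$ by Abel's theorem). The argument runs in close parallel to the proof of Lemma \ref{thm-ItI1}, the one new feature being the asymmetric innermost form $\om_0$ against the subsequent $\om_1$'s; this single $\om_0$ is exactly what prevents a naive change of variables $s=t^2$ from reducing \eqref{FIIt1} to \eqref{FII1}, and thereby forces the direct induction above.
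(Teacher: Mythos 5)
Your proof is correct: the base case $p=1$ expands as you say, and I verified both derivative recursions, including the delicate $m_p=1$ step, where
\begin{equation*}
\frac{d}{dx}t_n^\star(\bfm;x)=\frac{1}{1-x^2}\Bigl(t_n^\star(\ora\bfm_{\hskip-2pt p-1})-x\,t_n^\star(\ora\bfm_{\hskip-2pt p-1};x)\Bigr),
\end{equation*}
the $x$-free term cancels the $j=p$ boundary term via $\frac{d}{dx}\ti_1(x)=\frac{1}{1-x^2}$, and the sign flip $-(-1)^p=(-1)^{p-1}$ makes the remainder exactly $\frac{x}{1-x^2}R_{p-1}(\ora\bfm_{\hskip-2pt p-1};x)$, as you claim. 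Note that this paper gives no proof of the lemma at all---it is imported from \cite[Thm.~3.6]{XuZhao2021a}---and your differentiation-plus-induction on the weight is essentially the standard argument used for such iterated-integral identities in that reference (cf.\ the MZV analogue, Lemma~\ref{thm-ItI1}), so your approach matches the intended one; your caveat that at $x=1$ the identity requires $m_p\ge 2$ (or an Abel-type limit from $[0,1)$) is also apposite, since for $m_p=1$ both sides diverge there.
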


\begin{thm}\label{EATMZV1} For any $k,p\in\N$, $\bfm=(m_1,\ldots,m_p)\in\N^p$ with $m_p\geq 2$, we have
\begin{align*}
&\sum_{j=1}^{p} (-1)^{j} \zeta(\ola\bfm_{\hskip-1pt j})\sum_{n=1}^\infty \frac{\zeta_n^\star(\ora\bfm_{j-1})}{n^{k}4^n} \binom{2n}{n}
-(-1)^p \sum_{n=1}^\infty \frac{\zeta_n^\star(\bfm)}{n^{k}4^n} \binom{2n}{n}
\nonumber\\
=&\, 2^{p+1}\sum_{\sigma_j\in\{\pm 1\},\atop j=1,2,\ldots|\widetilde\bfm|_p+k-1} \Li_{(k+1,\bfm_-)^\vee} (-1,\sigma_1,\sigma_1\sigma_2,\ldots,\sigma_{|\widetilde\bfm|_p+k-2}\sigma_{|\widetilde\bfm|_p+k-1})\in \CMZV^2_{|{\bfm}|+k},
\end{align*}
where $|\widetilde\bfm|_j:=m_1+m_2+\cdots+m_j-j$.
\end{thm}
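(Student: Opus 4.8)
The plan is to collapse the left-hand side to a single iterated integral and then resolve it through the rationalizing substitution that is standard for central-binomial sums. First I would specialize Lemma~\ref{thm-ItI1} to $x=1$. Since $m_p\ge 2$, each $\Li_{\ola\bfm_{\hskip-1pt j}}(1)=\zeta(\ola\bfm_{\hskip-1pt j})$ converges and $\zeta_n^\star(\ora\bfm;1)=\zeta_n^\star(\bfm)$, so the right-hand side of \eqref{FII1} is exactly the negative of the per-$n$ combination standing inside the sum on the left of the theorem. Hence that combination equals $-\,n\int_0^1 t^{n-1}dt\,\tx_1\tx_0^{m_1-1}\cdots\tx_1\tx_0^{m_p-1}$, and after multiplying by $\binom{2n}{n}/(n^k4^n)$ and summing, the whole left-hand side becomes
\[ -\sum_{n=1}^\infty \frac{\binom{2n}{n}}{n^{k-1}4^n}\int_0^1 t^{n-1}dt\,\tx_1\tx_0^{m_1-1}\cdots\tx_1\tx_0^{m_p-1}. \]

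Next I would realize the factor $n^{-(k-1)}$ by inserting $k-1$ copies of $\tx_0=dt/t$ immediately outside the innermost form $t^{n-1}dt$, so that $k-1$ insertions divide by exactly $k-1$ extra powers of $n$, and then interchange summation and integration, which is legitimate because every factor is positive on $(0,1)$. Using $\sum_{n\ge1}\binom{2n}{n}4^{-n}t^{n-1}=\tfrac1t\big((1-t)^{-1/2}-1\big)$, the innermost one-form turns into $\tfrac1t\big((1-t)^{-1/2}-1\big)\,dt$, so the left-hand side equals $-\int_0^1\tfrac1t\big((1-t)^{-1/2}-1\big)dt\,\tx_0^{k-1}\tx_1\tx_0^{m_1-1}\cdots\tx_1\tx_0^{m_p-1}$.

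The crux is the substitution $t=1-u^2$ (equivalently $u=\sqrt{1-t}$, the composite of the maps $t\mapsto 4s/(1+s)^2$ and $s\mapsto(1-s)/(1+s)$). Viewing the iterated integral as a genuine Lebesgue integral over a simplex, this decreasing change of variables reverses the order of the forms and pulls each back with $|dt/du|=2u$; a short computation gives
\[ \tfrac1t\big((1-t)^{-1/2}-1\big)dt\mapsto \frac{2\,du}{1+u},\qquad \tx_0\mapsto 2\om_1,\qquad \tx_1\mapsto 2\tx_0. \]
Thus $\tx_1$ (a $1$-letter) becomes the $0$-letter $\tx_0$ while $\tx_0$ (a $0$-letter) becomes the $1$-letter $\om_1$: the substitution interchanges $0$- and $1$-letters, which is exactly Hoffman duality realized on iterated integrals. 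Collecting the factor $2$ from each of the $|\bfm|+k$ forms and reading off the reversed word, the left-hand side becomes $-2^{|\bfm|+k}\int_0^1\om_1^{m_p-1}\tx_0\,\om_1^{m_{p-1}-1}\tx_0\cdots\tx_0\,\om_1^{m_1-1}\tx_0\,\om_1^{k-1}\frac{du}{1+u}$, and one checks from \eqref{eq:HdualDef} that the $0/1$-skeleton $1^{m_p-1}0\cdots01^{m_1-1}01^{k}$ of this word is precisely the composition $(k+1,\bfm_-)^\vee$.

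Finally I would expand $\om_1=\tfrac12(\tx_1+\tx_{-1})$ and write $du/(1+u)=-\tx_{-1}$: the fixed outermost letter $\tx_{-1}$ supplies the first argument $-1$, each of the $|\bfm|+k-p-1$ copies of $\om_1$ contributes a factor $\tfrac12$ and an independent sign $\sigma_j\in\{\pm1\}$, and the rule $\xi_j=\prod_{i\le j}z_i^{-1}$ from \eqref{czeta-itIntExpression} converts these signs into the consecutive products $(-1,\sigma_1,\sigma_1\sigma_2,\dots)$. The powers of two balance as $2^{|\bfm|+k}\cdot(\tfrac12)^{|\bfm|+k-p-1}=2^{p+1}$, which yields the asserted identity; and since every surviving term is a level-two CMZV of weight $|\bfm|+k$, the membership in $\CMZV^2_{|\bfm|+k}$ is automatic. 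The main obstacle is precisely this last bookkeeping: one must verify rigorously that the reversed $\om_1$-word is the Hoffman dual $(k+1,\bfm_-)^\vee$ — including the degenerate parts created when some $m_i=1$ — and must treat the substitution as a Lebesgue change of variables (using $|dt/du|$, so that no spurious sign enters) in order to land on the clean constant $2^{p+1}$ rather than $\pm2^{p+1}$.
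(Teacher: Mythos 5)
Your proposal is correct and is essentially the paper's own proof: your generating function $\frac1t\bigl((1-t)^{-1/2}-1\bigr)$ coincides with the kernel $\frac{1}{1-t+\sqrt{1-t}}$ of \eqref{CB1}, your single substitution $t=1-u^2$ is exactly the paper's composite of $t\to 1-t$ followed by $t\to t^2$, and both arguments then expand $2\om_1=\sum_{\sigma\in\{\pm1\}}\frac{\sigma\,dt}{1-\sigma t}$ and read off the level-two CMZVs via \eqref{equ:glInteratedInt}. The only difference is presentational: you make the orientation-reversal, sign, and Hoffman-dual bookkeeping explicit (correctly landing on the constant $2^{p+1}$), whereas the paper compresses that final step into ``one can obtain the desired evaluation by applying \eqref{equ:glInteratedInt}.''
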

\begin{proof}
Multiplying \eqref{FII1} by $\frac{\binom{2n}{n}}{n^{k}4^n}$, summing up, and using the well-known formula (see \cite{ChenH16,Leh1985})
\begin{equation}\label{CB1}
\sum_{n=1}^\infty\frac{\binom{2n}{n}}{n 4^n }x^n
    =2\log\left(\frac{2}{1+\sqrt{1-x}}\right)=\int_{0}^x \frac{dt}{1-t+\sqrt{1-t}}\quad \forall x\in[-1,1),
\end{equation}
we have
\begin{align*}
&\quad (-1)^p \sum_{n=1}^\infty \frac{\zeta_n^\star(\bfm;x)}{n^{k}4^n} \binom{2n}{n}-\sum_{j=1}^{p} (-1)^{j} \Li_{\ola\bfm_{j}}(x)\sum_{n=1}^\infty \frac{\zeta_n^\star(\ora\bfm_{j-1})}{n^{k}4^n} \binom{2n}{n}\\
&\overset{\phantom{t\to 1-t}}=\int_0^x \frac{dt}{1-t+\sqrt{1-t}}\tx_0^{k-1}\tx_1\tx_0^{m_1-1}\cdots\tx_1\tx_0^{m_p-1} \\
&\overset{t\to 1-t}=\int_{1-x}^1 \tx_1^{m_p-1}\tx_0\cdots \tx_1^{m_1-1}\tx_0 \tx_1^{k-1}\frac{dt}{t+\sqrt{t}}\\
&\overset{\ t\to t^2\ }=2^{p+1}\int_{\sqrt{1-x}}^1 \left(\frac{2tdt}{1-t^2}\right)^{m_p-1}\frac{dt}{t}\cdots \left(\frac{2tdt}{1-t^2}\right)^{m_1-1}\frac{dt}{t} \left(\frac{2tdt}{1-t^2}\right)^{k-1}\frac{dt}{1+t}.
\end{align*}
Letting $x=1$ and noting the fact that
\begin{align*}
 \frac{2tdt}{1-t^2}=\sum_{\sigma\in\{\pm 1\}} \frac{\sigma dt}{1-\sigma t},
\end{align*}
one can obtain the desired evaluation by applying \eqref{equ:glInteratedInt}.
\end{proof}

\begin{thm}\label{EATMtV1} For any $k,p\in\N$, $\bfm=(m_1,\ldots,m_p)\in\N^p$ with $m_p\geq 2$, we have
\begin{align}\label{equ-EATMtV1}
\sum_{j=1}^{p+1} (-1)^{j-1} t(\ola\bfm_{\hskip-2pt j})\sum_{n=1}^\infty \frac{ t^\star_n(\ora\bfm_{\hskip-2pt j-1})}{n^{k}4^n}\binom{2n}{n}\in \CMZV^4_{|{\bfm}|+k}.
\end{align}
\end{thm}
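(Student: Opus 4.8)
The plan is to run the proof of Theorem~\ref{EATMZV1} with Lemma~\ref{thm-ItI11} replacing Lemma~\ref{thm-ItI1}, while tracking how the odd denominators raise the level from $2$ to $4$. First I would multiply the identity \eqref{FIIt1} by $\binom{2n}{n}/(n^k4^n)$ and sum over $n\ge1$. Since $\ti_{\ola\bfm_j}(x)$ is independent of $n$, its right-hand side becomes $(-1)^p\sum_{n\ge1}\frac{t^\star_n(\bfm;x)}{n^k4^n}\binom{2n}{n}$ minus $\sum_{j=1}^p(-1)^j\ti_{\ola\bfm_j}(x)\sum_{n\ge1}\frac{t^\star_n(\ora\bfm_{j-1})}{n^k4^n}\binom{2n}{n}$. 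Setting $x=1$ and using $t^\star_n(\bfm;1)=t^\star_n(\bfm)$, $\ti_{\ola\bfm_j}(1)=t(\ola\bfm_j)$, and the conventions $t^\star_n(\emptyset)=t(\emptyset)=1$, this collapses --- after rewriting $-(-1)^j=(-1)^{j-1}$ and identifying the $j=p+1$ term with the first summand --- into exactly the left-hand side of the theorem. Hence it suffices to show that the matching sum of the left-hand sides of \eqref{FIIt1}, evaluated at $x=1$, lies in $\CMZV^4_{|\bfm|+k}$.

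To evaluate that sum I would use the central binomial generating function \eqref{CB1}. The innermost factor $2n\int_0^x t^{2n-1}\,dt$ produces an even power $u^{2n}$, so \eqref{CB1} with $x$ replaced by $t^2$ supplies the innermost one-form $\frac{2t\,dt}{1-t^2+\sqrt{1-t^2}}$, whose expansion is $\sum_{n\ge1}\frac{\binom{2n}{n}}{n4^n}2n\,t^{2n-1}dt$. The surplus factor $n^{-(k-1)}$ is then realised by $k-1$ copies of $\frac{2\,dt}{t}$, because $\int_0^v\frac{2}{u}u^{2n}\,du=v^{2n}/n$ divides an even-power coefficient by $n$ (here a factor $2$ must accompany each $dt/t$, in contrast to the level-two case). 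This gives that the summed left-hand side equals
\[
\int_0^1\frac{2t\,dt}{1-t^2+\sqrt{1-t^2}}\Big(\frac{2\,dt}{t}\Big)^{k-1}\om_0\tx_0^{m_1-1}\om_1\tx_0^{m_2-1}\cdots\om_1\tx_0^{m_p-1},
\]
an iterated integral of exactly $|\bfm|+k$ one-forms; the hypothesis $m_p\ge2$ makes the outermost form $dt/t$, which secures convergence at $t=1$.

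The decisive step is the rationalising substitution $t=\frac{2s}{1+s^2}$, under which $\sqrt{1-t^2}=\frac{1-s^2}{1+s^2}$ becomes rational and the interval $[0,1]$ is preserved. A direct computation then sends every one-form to a $\mathbb{Z}$-linear combination of the level-four forms $\tx_\xi=\frac{ds}{\xi-s}$ with $\xi\in\{0,\pm1,\pm i\}$: one finds $\om_1\mapsto\frac{4s\,ds}{1-s^4}=-\ty$, $\om_0\mapsto\frac{2\,ds}{1-s^2}=\tx_1-\tx_{-1}$, $\frac{dt}{t}\mapsto\frac{(1-s^2)ds}{s(1+s^2)}=-\tx_0+\tx_i+\tx_{-i}$, and the innermost generating form $\mapsto\frac{4s\,ds}{1+s^2}=-2(\tx_i+\tx_{-i})$. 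The factor $1+s^2=(1-is)(1+is)$ is precisely what introduces the primitive fourth roots $\pm i$ and lifts the level from $2$ to $4$. Expanding the integral by multilinearity and invoking \eqref{czeta-itIntExpression} for fourth roots of unity then expresses it as a $\mathbb{Q}$-combination of level-four CMZVs, each of weight equal to the number of one-forms, namely $|\bfm|+k$; this yields the claimed membership in $\CMZV^4_{|\bfm|+k}$.

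The main difficulty I anticipate is bookkeeping rather than depth: getting the even-power generating form and its compensating factors $\frac{2\,dt}{t}$ exactly right, and then verifying that $t=\frac{2s}{1+s^2}$ genuinely rationalises the innermost form while leaving every word convergent after expansion --- the leftmost letter stays in $\{\tx_i,\tx_{-i}\}$ at $s=0$ and the rightmost letter in $\{\tx_0,\tx_i,\tx_{-i}\}$ at $s=1$, so no $\tx_1$ ever occupies the outer endpoint. I would also give a short justification for interchanging summation and integration (uniform convergence on $[0,1)$ together with continuity at the endpoint), exactly as in the proof of Theorem~\ref{EATMZV1}.
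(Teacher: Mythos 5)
Your proposal is correct and follows essentially the same route as the paper: multiply \eqref{FIIt1} by $\binom{2n}{n}/(n^k4^n)$, sum via the even-power central binomial generating function \eqref{CB1}, then rationalize $\sqrt{1-t^2}$ and invoke \eqref{czeta-itIntExpression}; your substitution $t=\frac{2s}{1+s^2}$ is just the paper's $t\mapsto\frac{1-t^2}{1+t^2}$ precomposed with the involution $s\mapsto\frac{1-s}{1+s}$, so both produce the same level-four alphabet $\tx_\xi$, $\xi\in\{0,\pm1,\pm i\}$ (the paper's orientation-reversing choice accounts for its extra factor $(-1)^{|\bfm|+k}$, which you avoid). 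The one slip is a sign: $\frac{dt}{t}\mapsto\frac{(1-s^2)\,ds}{s(1+s^2)}=\tx_0+\tx_i+\tx_{-i}$, not $-\tx_0+\tx_i+\tx_{-i}$, since $\tx_i+\tx_{-i}=\frac{-2s\,ds}{1+s^2}$; this is immaterial for membership in the $\mathbb{Q}$-span $\CMZV^4_{|\bfm|+k}$.
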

\begin{proof}
Multiplying \eqref{FIIt1} by $\frac{\binom{2n}{n}}{n^{k}4^n}$, summing up, and noting the fact that
\begin{equation}\label{CB1}
\sum_{n=1}^\infty\frac{\binom{2n}{n}}{n^k 4^n }x^{2n}
    =2^k\int_{0}^x \frac{tdt}{1-t^2+\sqrt{1-t^2}} \left(\frac{dt}{t}\right)^{k-1},
\end{equation}
we have
\begin{align}\label{eq-for-mthss-mtv}
&\quad (-1)^p \sum_{n=1}^\infty \frac{t_n^\star(\bfm;x)}{n^{k}4^n} \binom{2n}{n}-\sum_{j=1}^{p} (-1)^{j} \ti_{\ola\bfm_{j}}(x)\sum_{n=1}^\infty \frac{t_n^\star(\ora\bfm_{j-1})}{n^{k}4^n} \binom{2n}{n}\nonumber\\
&\overset{\phantom{t\to 1-t}}=2^k\int_0^x \frac{tdt}{1-t^2+\sqrt{1-t^2}}\tx_0^{k-1}\om_0\tx_0^{m_1-1}\om_1\tx_0^{m_2-1}\cdots\om_1\tx_0^{m_p-1}.
\end{align}
Applying $t\to \frac{1-t^2}{1+t^2}$, we get
\begin{alignat}{4} \label{equ:changeVar1}
&\tx_0=\frac{dt}{t}\,  \to -\left(\frac{2tdt}{1+t^2}+\frac{2tdt}{1-t^2} \right)=\ty,
\qquad &
\om_0=&\, \frac{dt}{1-t^2}\to -\frac{dt}{t}=-\tx_0, \\
&\frac{tdt}{1-t^2+\sqrt{1-t^2}} \,  \to  -(\tx_i+\tx_{-i}-2\tx_{-1}),
&\qquad \om_1=&\, \frac{tdt}{1-t^2}\to -\left(\frac{dt}{t}-\frac{2tdt}{1+t^2}\right)=\tz.   \label{equ:changeVar2}
\end{alignat}
Hence, applying \eqref{equ:changeVar1} and \eqref{equ:changeVar2} to \eqref{eq-for-mthss-mtv} with $x=1$ yields
\begin{align*}
&\sum_{j=1}^{p} (-1)^{j-1} t(\ola\bfm_{j})\sum_{n=1}^\infty \frac{t_n^\star(\ora\bfm_{j-1})}{n^{k}4^n} \binom{2n}{n}\nonumber\\
&\overset{\phantom{t\to 1-t}}=2^k(-1)^{|\bfm|+k}\int_0^1 \ty^{m_p-1}\tz \cdots \ty^{m_2-1}\tz\ty^{m_1-1}\tx_0\ty^{k-1}(\tx_i+\tx_{-i}-2\tx_{-1}).
\end{align*}
Finally, the iterated integral expression \eqref{czeta-itIntExpression} of CMZVs implies \eqref{equ-EATMtV1} immediately.
\end{proof}

\begin{thm}\label{EATMZV2}
For any $k,p\in\N_0$, $\bfm=(m_1,\ldots,m_p)\in\N^p$ and $x\in [0,1]$, we have
\begin{align}\label{eq:MT}
&\sum_{n=1}^\infty \frac{\zeta_n^\star(\bfm^\vee;x)}{ n^{k+2}4^n}\binom{2n}{n} =\sum_{j=1}^p (-1)^{p-j} \Li_{\ola\bfm_{j+1}}(1-x)\sum_{n=1}^\infty \frac{\zeta_n^\star(\ora\bfm_j^\vee)}{ n^{k+2}4^n}\binom{2n}{n}\nonumber\\
&\qquad+(-1)^{p+k} 2\log(2) \Li_{(\ola\bfm)_+,\yi_k}(1-x)
 +\sum_{j=1}^k (-1)^{p+k-j}\Li_{\ola\bfm_+,\yi_{k-j}}(1-x)\su \frac{\binom{2n}{n}}{n^{j+1}4^n}\nonumber\\
&\qquad+(-1)^{p+k}2^{|\bfm|+2-p} \sum_{\sigma_j\in\{\pm 1\},\atop j=1,2,\ldots,p+k}\Li_{(\ola\bfm)_+,\yi_{k+1}}(\sigma_1\sqrt{1-x},\sigma_1\sigma_2,\ldots,\sigma_{p+k-1}\sigma_{p+k},-\sigma_{p+k}).
\end{align}
\end{thm}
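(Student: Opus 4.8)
The plan is to derive \eqref{eq:MT} from the duality identity \eqref{IMP-2} by the same "multiply by $\binom{2n}{n}/(n^{\bullet}4^n)$ and sum" mechanism used in Theorems \ref{EATMZV1} and \ref{EATMtV1}, the new feature being that here the central-binomial weight sits on the \emph{outermost} integration variable (the one tending to $1$), which forces a cascade of boundary contributions. First I would multiply \eqref{IMP-2} by $\binom{2n}{n}/(n^{k+2}4^n)$ and sum over $n\ge1$. On the left the two sums $\sum_n\zeta_n^\star(\bfm^\vee;x)\binom{2n}{n}/(n^{k+2}4^n)$ and $\sum_{j=1}^p(-1)^{p-j}\Li_{\ola\bfm_{j+1}}(1-x)\sum_n\zeta_n^\star(\ora\bfm_j^\vee)\binom{2n}{n}/(n^{k+2}4^n)$ appear, i.e. exactly the first line and the first summand of \eqref{eq:MT}. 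On the right, after interchanging sum and integral, the factor $n$ in \eqref{IMP-2} cancels one power of $n$, leaving
\[
(-1)^p\int_x^1\tx_1^{m_p-1}\tx_0\cdots\tx_1^{m_1}\tx_0\,\phi(t)\,dt,\qquad
\phi(t):=\sum_{n=1}^\infty\frac{\binom{2n}{n}}{n^{k+1}4^n}t^{n-1}=\frac{g_{k+1}(t)}{t},
\]
where $g_j(t):=\sum_{n\ge1}\binom{2n}{n}t^n/(n^j4^n)$ and $t$ is the largest integration variable.

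Next I would unwind $\phi$. By \eqref{CB1} one has $g_1(t)=\int_0^t\frac{ds}{1-s+\sqrt{1-s}}$, hence $g_{j+1}'(t)=g_j(t)/t$ and $g_1'(t)=1/(1-t+\sqrt{1-t})$, while $g_1(1)=2\log2$ and $g_j(1)=\sum_n\binom{2n}{n}/(n^j4^n)$. Writing $\Phi_0(t):=\int_x^t\tx_1^{m_p-1}\tx_0\cdots\tx_1^{m_1}\tx_0$ and $\Phi_{i+1}(t):=\int_x^t\Phi_i(s)\,ds/s$ (so $\Phi_i$ is $\Phi_0$ with $i$ extra factors $\tx_0$ appended), I would integrate the outer integral by parts $k+1$ times, each time differentiating the $g$-factor and integrating the polylog primitive. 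Since $\Phi_i(x)=0$, only the endpoint $t=1$ survives, and the successive boundary values are $\Phi_i(1)\,g_{k+2-i}(1)$; applying $t\mapsto1-t$ and \eqref{equ:glInteratedInt} identifies $\Phi_i(1)=\Li_{(\ola\bfm)_+,\yi_{i-1}}(1-x)$. The term carrying $g_1(1)=2\log2$ reproduces the $2\log2$ summand of \eqref{eq:MT}, and the terms carrying $g_{j+1}(1)$ (after reindexing $i\mapsto k+1-j$, which also flips the signs correctly) reproduce the finite sum $\sum_{j=1}^k(-1)^{p+k-j}\Li_{(\ola\bfm)_+,\yi_{k-j}}(1-x)\sum_n\binom{2n}{n}/(n^{j+1}4^n)$.

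The remaining bulk term is $(-1)^{p+k+1}\int_x^1\Phi_{k+1}(t)\,\frac{dt}{1-t+\sqrt{1-t}}$. To evaluate it I would substitute $t\mapsto1-t$ (turning $\frac{dt}{1-t+\sqrt{1-t}}$ into $\frac{dt}{t+\sqrt t}$ and interchanging $\tx_0\leftrightarrow\tx_1$) and then $t\mapsto t^2$, under which $\tx_0\mapsto 2\,dt/t$, $\tx_1\mapsto\frac{2t\,dt}{1-t^2}=\sum_{\sigma\in\{\pm1\}}\frac{\sigma\,dt}{1-\sigma t}$, $\frac{dt}{t+\sqrt t}\mapsto\frac{2\,dt}{1+t}$, and the upper limit becomes $\sqrt{1-x}$. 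The $|\bfm|-p+1$ forms $\tx_0$ and the single form $\frac{dt}{1+t}$ each contribute a factor $2$, giving $2^{|\bfm|+2-p}$; the $p+k$ forms $\tx_1$ produce the free signs $\sigma_1,\dots,\sigma_{p+k}$, while $\frac{dt}{1+t}$ supplies the fixed last entry $-\sigma_{p+k}$. Feeding this into \eqref{czeta-itIntExpression}/\eqref{equ:classicalLi} yields $(-1)^{p+k}2^{|\bfm|+2-p}\sum_{\bfsi}\Li_{(\ola\bfm)_+,\yi_{k+1}}(\sigma_1\sqrt{1-x},\dots,-\sigma_{p+k})$, the last summand of \eqref{eq:MT}; membership in $\CMZV^2_{|\bfm|+k+2}$ is then automatic.

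The main obstacle is the bookkeeping in the repeated integration by parts: one must track, simultaneously and with correct signs, both the index of the surviving central-binomial constant $g_{j+1}(1)$ and the precise number $\yi_{i-1}$ of trailing $1$'s in each boundary polylog $\Phi_i(1)$, and verify that the two changes of variables deliver exactly the power $2^{|\bfm|+2-p}$, the argument $\sqrt{1-x}$, and the sign pattern $\sigma_1\sqrt{1-x},\sigma_1\sigma_2,\dots,\sigma_{p+k-1}\sigma_{p+k},-\sigma_{p+k}$ prescribed in \eqref{eq:MT}. A weight-and-depth count ($\Phi_{k+1}(1)$ has weight $|\bfm|+k+1$ and depth $p+k$, and the final square-root form raises each by one) is the clean way to pin these constants down and to reconcile the count of free signs with the depth of the resulting CMZV.
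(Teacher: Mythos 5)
Your proposal is correct and, in outline, follows the route the paper itself prescribes for Theorem \ref{EATMZV2} (``apply \eqref{IMP-2} and argue as in Theorem \ref{EATMZV1}''), but since the paper leaves every detail to the reader, the fair comparison is with the detailed computation the authors actually perform for the refined version, Theorem \ref{thm-GMZSBVs}. There the top-attached weight $\sum_n\binom{2n}{n}t^n/(n^{k+2}4^n)$ is handled through the closed forms \eqref{CBP1}--\eqref{CBP2}: the kernel $\log^k(z/t)/k!$ is expanded into $\log^i(z)\log^j(1/t)$ pieces, which after $t\to1-t$ become shuffle products with the word $\tx_0^{m_1-1}\tx_1\cdots\tx_1\tx_0^{m_p-1}$, and the constants $\sum_n\binom{2n}{n}/(n^{j+2}4^n)$ and $2\log 2$ enter as the $z=1$ data in \eqref{CBP2}. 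Your repeated integration by parts, with $g_{j+1}'(t)=g_j(t)/t$, $g_1'(t)=1/(1-t+\sqrt{1-t})$ and $\Phi_{i+1}'(t)=\Phi_i(t)/t$, is exactly the ``unexpanded'' form of the same computation: each step peels one $\tx_0$, and each boundary value $\Phi_i(1)\,g_{k+2-i}(1)$ is the iterated-integral avatar of one term of the binomial expansion of the log kernel. Your organization has the advantage of producing \eqref{eq:MT} directly --- the signs $(-1)^{p+i-1}$, the constants $g_{j+1}(1)=\sum_n\binom{2n}{n}/(n^{j+1}4^n)$ and $g_1(1)=2\log 2$, and the bulk term all fall out mechanically --- whereas the paper's expansion yields the $5$-poset/shuffle form \eqref{xxxeq:MT} first and recovers \eqref{eq:MT} from it. I checked your endgame: after $t\to1-t$ and $t\to t^2$ the word of $\Phi_{k+1}$ contains $|\bfm|-p+1$ forms $\tx_0$ and $p+k$ forms $\tx_1$, which together with $\frac{dt}{t+\sqrt t}\mapsto\frac{2\,dt}{1+t}=-2\,\frac{-dt}{1+t}$ give exactly the constant $(-1)^{p+k}2^{|\bfm|+2-p}$ and the argument string $(\sigma_1\sqrt{1-x},\sigma_1\sigma_2,\ldots,\sigma_{p+k-1}\sigma_{p+k},-\sigma_{p+k})$ of \eqref{eq:MT}.

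One wrinkle you should repair for internal consistency: you define $\Phi_0$ by copying the word of \eqref{IMP-2} verbatim, ending in $\cdots\tx_1^{m_1}\tx_0$, yet you later assert $\Phi_i(1)=\Li_{(\ola\bfm)_+,\yi_{i-1}}(1-x)$; with that trailing $\tx_0$ present one would instead obtain $\Li_{(\ola\bfm)_+,\yi_{i}}(1-x)$, shifting the whole second line of \eqref{eq:MT} by one. The trailing $\tx_0$ cannot in fact be there: a weight count (the word must consist of $|\bfm|$ one-forms, since $n\int_x^1(\cdots)t^{n-1}\,dt$ is weight-preserving) and the test case $p=m_1=1$, where \eqref{IMP-2} must read
\begin{equation*}
\zeta^\star_n(1;x)-\zeta^\star_n(1)=-n\int_x^1 \frac{dt}{1-t}\, t^{n-1}dt=-\int_x^1\frac{1-t^n}{1-t}\,dt,
\end{equation*}
show that the word ends in $\tx_1^{m_1}$ followed immediately by $t^{n-1}dt$. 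This is an off-by-one in the paper's display of \eqref{IMP-2} (its corrected form is confirmed by \eqref{d1} and by the $k=0$ match with Corollary \ref{cor-EAS-2}), so your final identifications are the correct ones; just state the word for $\Phi_0$ accordingly, and your derivation is complete and sound.
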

\begin{proof}
One can derive the result by applying \eqref{IMP-2} and using a similar
argument as in the proof of Theorem \ref{EATMZV1}. We leave the detail to the interested reader.
\end{proof}

Letting $k=0$ in \eqref{eq:MT} yields the following corollary.
\begin{cor}\label{cor-EAS-2}
For any $p\in\N_0$, $\bfm=(m_1,\ldots,m_p)\in\N^p$ and $x\in [0,1]$, we have
\begin{align}\label{CBP3}
&\sum_{n=1}^\infty \frac{\zeta_n^\star(\bfm^\vee;x)}{n^2 4^n}\binom{2n}{n}
=(-1)^p 2\log(2)\Li_{m_p,\ldots,m_2,m_1+1}(1-x)\nonumber\\
&\quad+\sum_{j=1}^p (-1)^{p-j} \Li_{\ola\bfm_{j+1}}(1-x)
    \sum_{n=1}^\infty \frac{\zeta_n^\star(\ora\bfm_j^\vee)}{n^24^n }\binom{2n}{n}\nonumber\\
&\quad+(-1)^p2^{|{\bf m}|+2-p}\sum_{\sigma_j\in\{\pm 1\}\atop j=1,2,\ldots,p} \Li_{(\ola\bfm)_+,1}(\sigma_1\sqrt{1-x},\sigma_1\sigma_2,\ldots,\sigma_{p-1}\sigma_{p},-\sigma_{p}).
\end{align}
\end{cor}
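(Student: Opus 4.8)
The plan is to obtain Corollary~\ref{cor-EAS-2} directly from Theorem~\ref{EATMZV2} by specializing the free parameter $k$ to $0$, which is legitimate since \eqref{eq:MT} is asserted for all $k\in\N_0$. The work is then entirely bookkeeping: I would substitute $k=0$ into each of the four terms on the right-hand side of \eqref{eq:MT} and simplify using the two conventions that govern the degenerate data, namely that $\yi_0=\emptyset$ (the empty string of $1$'s) and that an indexing sum $\sum_{j=1}^{k}$ with $k=0$ is empty.

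First I would record the trivial simplifications. On the left, $n^{k+2}=n^2$, so the left-hand side becomes exactly $\su \zeta_n^\star(\bfm^\vee;x)\binom{2n}{n}/(n^2 4^n)$, matching \eqref{CBP3}. The first sum on the right of \eqref{eq:MT}, namely $\sum_{j=1}^p (-1)^{p-j}\Li_{\ola\bfm_{j+1}}(1-x)\su \zeta_n^\star(\ora\bfm_j^\vee)\binom{2n}{n}/(n^{k+2}4^n)$, is unaffected apart from the same replacement $n^{k+2}\to n^2$, so it reproduces the second line of \eqref{CBP3} verbatim.

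Next I would handle the two genuinely $k$-dependent pieces. In the $2\log(2)$ term the exponent $(-1)^{p+k}$ collapses to $(-1)^p$, and the index string $(\ola\bfm)_+,\yi_k$ becomes $(\ola\bfm)_+,\yi_0=(\ola\bfm)_+$; unwinding the definitions $\ola\bfm=(m_p,\dots,m_1)$ and the $+$-operation (which raises the last entry) gives $(\ola\bfm)_+=(m_p,\dots,m_2,m_1+1)$, so this term is precisely $(-1)^p 2\log(2)\Li_{m_p,\dots,m_2,m_1+1}(1-x)$, the first line of \eqref{CBP3}. The finite sum $\sum_{j=1}^{k}(-1)^{p+k-j}\Li_{\ola\bfm_+,\yi_{k-j}}(1-x)\su\binom{2n}{n}/(n^{j+1}4^n)$ is an empty sum when $k=0$ and therefore drops out entirely, which is why no such term appears in the corollary. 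Finally, in the last ($\sigma$-summed) term, $(-1)^{p+k}=(-1)^p$, the range $j=1,\dots,p+k$ shrinks to $j=1,\dots,p$, and $\yi_{k+1}=\yi_1=(1)$ so the index string $(\ola\bfm)_+,\yi_{k+1}$ becomes $(\ola\bfm)_+,1$; this yields exactly the third line of \eqref{CBP3}.

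Assembling these four simplified terms reproduces \eqref{CBP3} line for line, completing the proof. There is no real obstacle here beyond the care needed with the empty-string and empty-sum conventions at $k=0$; the only point worth double-checking is that the degenerate cases of $\yi_0=\emptyset$ and the vanishing $\sum_{j=1}^{0}$ are consistent with the combinatorial meaning of the index strings in \eqref{eq:MT}. Alternatively, were one to want a self-contained argument, the corollary could be reproved from scratch by running the iterated-integral computation of Theorem~\ref{EATMZV1}, multiplying the duality relation \eqref{IMP-2} by $\binom{2n}{n}/(n^2 4^n)$ and summing, but specialization is by far the shorter route.
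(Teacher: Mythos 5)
Your proposal is correct and is exactly the paper's route: the authors derive Corollary~\ref{cor-EAS-2} simply by setting $k=0$ in \eqref{eq:MT}, and your careful handling of the degenerate cases ($\yi_0=\emptyset$, the empty sum $\sum_{j=1}^{0}$, and $(\ola\bfm)_+=(m_p,\dots,m_2,m_1+1)$) fills in precisely the bookkeeping the paper leaves implicit. Nothing further is needed.
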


\begin{exa}
Setting $p=1$ and $m_1=m$ gives
\begin{align*}
\sum_{n=1}^\infty \frac{\zeta_n^\star(m^\vee;x)}{n^2 4^n }\binom{2n}{n}&=\sum_{n=1}^\infty \frac{\zeta_n^\star(m^\vee)}{n^2 4^n }\binom{2n}{n}-2\log(2)\Li_{m+1}(1-x)\\
&\quad-2^{m+1} \Li_{m+1,1}(\sqrt{1-x},-1)-2^{m+1} \Li_{m+1,1}(-\sqrt{1-x},1).
\end{align*}
In particular,  taking $x=0$ we get
\begin{align*}
\sum_{n=1}^\infty \frac{\zeta_n^\star(m^\vee)}{n^2 4^n }\binom{2n}{n}=2\log(2)\ze(m+1)+2^{m+1} \ze(m+1,\bar 1)+2^{m+1} \ze(\overline{m+1},1).
\end{align*}
\end{exa}

\begin{re}
Having dealt with the case $k\geq 0$ in \eqref{eq:MT} we now provide a formula for $k=-1$.
Replacing $n$ by $k$ in \eqref{IMP-2}, summing both sides of it over $k=1,\dots,n$,
then changing $m_1\to m_1-1$, we obtain
\begin{multline}\label{d1}
\zeta_n^\star(\bfm^\vee;x)-\sum\limits_{j=1}^p (-1)^{p-j}\zeta^\star_n(\ora\bfm_j^\vee)\Li_{\ola\bfm_{j+1}}(1-x) \\
=(-1)^p\int_{0}^{1-x} \frac{1-(1-t)^n}{t}dt\, \tx_0^{m_1-1}\tx_1 \tx_0^{m_2-1}\cdots\tx_1\tx_0^{m_p-1},
\end{multline}
where $m_j\geq 1$ with $m_1\geq 2$. Here we used the fact that $(1,\bfm^\vee)=(m_1+1,m_2,\dots,m_p)^\vee$.
However, \eqref{d1} still holds for $m_1=1$, the proof of which is left to the
interested reader.
\end{re}

\begin{cor}\label{cord1}
For any $p\in\N_0$, $\bfm=(m_1,\ldots,m_p)\in\N^p$ and $x\in [0,1]$, we have
\begin{align}\label{d2}
&\sum_{n=1}^\infty \frac{\zeta_n^\star(\bfm^\vee;x)}{n4^n}\binom{2n}{n}
 =\sum_{j=1}^p (-1)^{p-j} \Li_{\ola\bfm_{j+1}}(1-x)\sum_{n=1}^\infty \frac{\zeta_n^\star(\ora\bfm_j^\vee)}{n 4^n }\binom{2n}{n}\nonumber\\
&\qquad-(-1)^p2^{|\bfm|+2-p}\sum_{\sigma_j\in\{\pm 1\}\atop j=1,2,\ldots,p-1}
\Li_{\ola\bfm_+}(\sigma_1\sqrt{1-x},\sigma_1\sigma_2,\ldots,\sigma_{p-2}\sigma_{p-1},-\sigma_{p-1}).
\end{align}
\end{cor}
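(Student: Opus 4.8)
The plan is to push the identity \eqref{d1} of the preceding Remark through exactly the summation-and-substitution machine used to prove Theorem~\ref{EATMZV1}, the only change being the weight $1/n$ in place of $1/n^{k}$. Concretely, I would multiply both sides of \eqref{d1} by $\binom{2n}{n}/(n4^n)$ and sum over $n\ge 1$. All series involved converge absolutely since $\binom{2n}{n}/4^n=O(n^{-1/2})$ makes each summand $O(n^{-3/2})$ up to harmless logarithmic factors. The left-hand side of \eqref{d1} then reproduces verbatim the star sum on the left of \eqref{d2} together with the recursive combination $\sum_{j=1}^p(-1)^{p-j}\Li_{\ola\bfm_{j+1}}(1-x)\sum_{n=1}^\infty \frac{\zeta_n^\star(\ora\bfm_j^\vee)}{n4^n}\binom{2n}{n}$ in its first line, so all the work lies in evaluating the summed right-hand side.

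On the right of \eqref{d1} only the innermost one-form $\frac{1-(1-t)^n}{t}\,dt$ depends on $n$. After interchanging $\sum_n$ with the iterated integration (legitimate by uniform convergence on $[0,1-x]$), I must evaluate
\[
\sum_{n=1}^\infty\frac{\binom{2n}{n}}{n4^n}\cdot\frac{1-(1-t)^n}{t}.
\]
Applying the generating function $\sum_{n\ge1}\binom{2n}{n}x^n/(n4^n)=2\log\!\big(2/(1+\sqrt{1-x})\big)$ once at $x=1$ and once at $x=1-t$, the two contributions collapse to $\frac{2\log(1+\sqrt t)}{t}$. The structural point is that this new kernel is itself an iterated integral, $2\log(1+\sqrt t)=\int_0^t \frac{ds}{\sqrt s\,(1+\sqrt s)}$, so it is absorbed by prepending one more innermost form, and the summed right-hand side becomes
\[
(-1)^p\int_{0}^{1-x}\frac{dt}{\sqrt t\,(1+\sqrt t)}\,\frac{dt}{t}\,\tx_0^{m_1-1}\tx_1\tx_0^{m_2-1}\cdots\tx_1\tx_0^{m_p-1}.
\]

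From here I would imitate the proof of Theorem~\ref{EATMZV1}: apply $t\mapsto t^2$, which moves the upper limit to $\sqrt{1-x}$ and converts each of the $|\bfm|+1$ forms into a $2$-multiple, namely $\frac{dt}{\sqrt t(1+\sqrt t)}\mapsto\frac{2\,dt}{1+t}$, $\tx_0\mapsto\frac{2\,dt}{t}$ and $\tx_1\mapsto\frac{2t\,dt}{1-t^2}$, and then rescale $t\mapsto\sqrt{1-x}\,t$ to restore the upper limit $1$. Expanding the $p-1$ copies of the middle form through $\frac{2t\,dt}{1-t^2}=\sum_{\sigma\in\{\pm1\}}\frac{\sigma\,dt}{1-\sigma t}$ introduces the signs $\sigma_1,\dots,\sigma_{p-1}$, while the kernel form keeps its fixed innermost cumulative argument $-\sqrt{1-x}$. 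Matching each resulting fixed-sign integral against \eqref{equ:glInteratedInt} via the cumulative-product rule $x_j=y_1\cdots y_j$ produces exactly $\Li_{\ola\bfm_+}(\sigma_1\sqrt{1-x},\sigma_1\sigma_2,\dots,\sigma_{p-2}\sigma_{p-1},-\sigma_{p-1})$, since the leftmost block has index $m_1+1$ and argument $-\sqrt{1-x}$ while the remaining blocks carry indices $m_2,\dots,m_p$ and arguments $\sigma_{p-1}\sqrt{1-x},\dots,\sigma_1\sqrt{1-x}$.

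The step needing the most care is the simultaneous bookkeeping of the scalar and the overall sign. Under $t\mapsto t^2$ each of the $|\bfm|+1$ forms acquires a factor $2$; once one matches against \eqref{equ:glInteratedInt}, the $|\bfm|-p+1$ forms of type $\tx_0$ each leave a bare factor $2$, the kernel form contributes $-2$ via $\frac{2\sqrt{1-x}\,dt}{1+\sqrt{1-x}\,t}=-2\cdot\frac{-\sqrt{1-x}\,dt}{1-(-\sqrt{1-x})t}$, and the $p-1$ middle forms have their factors of $2$ absorbed into the two-term sign sums rather than as explicit scalars. Collecting these yields $(-1)^p\cdot(-2)\cdot 2^{|\bfm|-p+1}=-(-1)^p2^{|\bfm|+2-p}$, precisely the coefficient in \eqref{d2}. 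The degenerate case $p=0$, where both the $j$-sum and the $\sigma$-sum are empty and \eqref{d1} reduces to the generating function above, must be checked separately but is immediate.
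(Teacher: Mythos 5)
Your proposal is correct and takes essentially the same route as the paper's own proof: multiply \eqref{d1} by $\binom{2n}{n}/(n4^n)$, sum via \eqref{CB1} to obtain the kernel $2\log(1+\sqrt{t})/t$, substitute $t\to t^2$, split $\frac{2t\,dt}{1-t^2}$ into the two fixed-sign forms, and match against \eqref{equ:glInteratedInt}; your coefficient bookkeeping $(-1)^p\cdot(-2)\cdot 2^{|\bfm|-p+1}=-(-1)^p2^{|\bfm|+2-p}$ agrees with \eqref{d2}. The only cosmetic difference is that you absorb the kernel as an extra innermost form $\frac{dt}{\sqrt{t}(1+\sqrt{t})}$ (becoming $\frac{2\,dt}{1+t}=-2\,\tx_{-1}$) before the substitution, whereas the paper carries $\frac{\log(1+t)}{t}\,dt$ along and converts it to $-\tx_{-1}\tx_0$ at the last step.
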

\begin{proof}
Put $\tx_{-1}=dt/(-1-t)$. Multiplying \eqref{d1} by $\frac{\binom{2n}{n}}{4^n n}$, we have
\begin{align*}
&\sum_{n=1}^\infty \frac{\zeta_n^\star(\bfm^\vee;x)}{n 4^n}\binom{2n}{n}
-\sum_{j=1}^p (-1)^{p-j} \Li_{\ola\bfm_{j+1}}(1-x)\sum_{n=1}^\infty \frac{\zeta_n^\star(\ora\bfm_j^\vee)}{n 4^n }\binom{2n}{n}\\
&=(-1)^p2\int_{0}^{1-x} \frac{\log(1+\sqrt{t})}{t}dt\, \tx_0^{m_1-1}\tx_1 \tx_0^{m_2-1}\cdots\tx_1\tx_0^{m_p-1}\\
&\overset{t\to t^2}=(-1)^p2^{|\bfm|+1} \int_{0}^{\sqrt{1-x}} \frac{\log(1+t)}{t}dt\, \tx_0^{m_1-1}\frac{tdt}{1-t^2} \tx_0^{m_2-1}\cdots\frac{tdt}{1-t^2}\tx_0^{m_p-1}\\
&=(-1)^p2^{|\bfm|+2-p}\int_{0}^{\sqrt{1-x}} \frac{\log(1+t)}{t}dt\, \tx_0^{m_1-1} \left(\tx_1+\tx_{-1}\right) \tx_0^{m_2-1}\cdots\left(\tx_1+\tx_{-1}\right)\tx_0^{m_p-1}\\
&=(-1)^p2^{|\bfm|+2-p}\sum_{\sigma_j\in\{\pm 1\}\atop j=1,2,\ldots,p-1} \int_{0}^{\sqrt{1-x}} \frac{\log(1+t)}{t}dt\, \tx_0^{m_1-1} \frac{\sigma_{p-1}dt}{1-\sigma_{p-1}t} \tx_0^{m_2-1}\cdots\frac{\sigma_1dt}{1-\sigma_1t}\tx_0^{m_p-1}\\
&=(-1)^{p-1}2^{|\bfm|+2-p}\sum_{\sigma_j\in\{\pm 1\}\atop j=1,2,\ldots,p-1} \int_{0}^{\sqrt{1-x}} \tx_{-1}\tx_0^{m_1}\frac{\sigma_{p-1}dt}{1-\sigma_{p-1}t} \tx_0^{m_2-1}\cdots\frac{\sigma_1dt}{1-\sigma_1t}\tx_0^{m_p-1}.
\end{align*}
by an elementary calculation, we obtain the desired formula.
\end{proof}

In particular, letting $p=1$ and $m_1=m$ in Corollary \ref{cord1} yields
\begin{align*}
\sum_{n=1}^\infty \frac{\zeta_n^\star(m^\vee;x)}{n4^n}\binom{2n}{n}=\sum_{n=1}^\infty \frac{\zeta_n^\star(m^\vee)}{n4^n}\binom{2n}{n}+2^{m+1}\Li_{m+1}(-\sqrt{1-x}).
\end{align*}
Noticing that $(m)^\vee=(\yi_m)$ by \eqref{eq:HdualDef} and setting $x=0$ in the above equation we see that
\begin{align*}
\sum_{n=1}^\infty \frac{\zeta_n^\star(\yi_m)}{n4^n}\binom{2n}{n}=-2^{m+1}\zeta(\overline{m+1}).
\end{align*}
Therefore we get
\begin{align*}
\sum_{n=1}^\infty \frac{\zeta_n^\star(\yi_m;x)}{n4^n}\binom{2n}{n}=-2^{m+1}\zeta(\overline{m+1})+2^{m+1}\Li_{m+1}(-\sqrt{1-x}).
\end{align*}

\section{Multiple integrals associated with 5-posets}
Yamamoto first used a graphical representation to study the MZVs and MZSVs in \cite{Yamamoto2014}. In this section, we introduce the multiple integrals associated with 5-labeled posets, and express some parametric Euler-Ap\'ery type MZSVs in terms of multiple integrals associated with 5-labeled posets.

\begin{defn}
A \emph{$5$-poset} is a pair $(X,\delta_X)$, where $X=(X,\leq)$ is
a finite partially ordered set and the \emph{label map} $\delta_X:X\to \{-2,-1,0,1,2\}$.
We often omit  $\delta_X$ and simply say ``a 5-poset $X$''.

Similar to 2-poset, a 5-poset $(X,\delta_X)$ is called \emph{admissible}
if $\delta_X(x) \ne 0$ for all maximal
elements and $\delta_X(x) \ne 1,\pm 2$ for all minimal elements $x \in X$.
\end{defn}

\begin{defn}
For an admissible $5$-poset $X$, we define the associated integral
\begin{equation}
I_z(X)=\int_{\Delta_X}\prod_{x\in X}\tx_{\delta_X(x)}(t_x),
\end{equation}
where
\[\Delta_X=\bigl\{(t_x)_x\in [0,z]^X \bigm| t_x<t_y \text{ if } x>y\bigr\}\quad (z\in[0,1])\]
and
\begin{equation*}
\tx_{-2}(t)=\frac{2tdt}{1-t^2},\quad\tx_{-1}(t)=\frac{-dt}{1+t},\quad \tx_0(t)=\frac{dt}{t}, \quad \tx_1(t)=\frac{dt}{1-t},\quad \tx_{2}(t)=\frac{2dt}{1-t^2}.
\end{equation*}
Clearly,  $\tx_{-2}=\tx_1+\tx_{-1}$ and $\tx_2=\tx_1-\tx_{-1}.$
Denote by $\emptyset$ the empty 5-poset and put $I_z(\emptyset):=1$.
\end{defn}

\begin{pro}\label{prop:shuffl3poset}
For non-comparable elements $a$ and $b$ of a $3$-poset $X$, $X^b_a$ denotes the $5$-poset that is obtained from $X$ by adjoining the relation $a<b$. If $X$ is an admissible $5$-poset, then the $5$-poset $X^b_a$ and $X^a_b$ are admissible and
\begin{equation}
I_z(X)=I_z(X^b_a)+I_z(X^a_b).
\end{equation}
\end{pro}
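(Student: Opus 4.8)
The plan is to prove the identity by decomposing the domain of integration $\Delta_X$ according to the relative order of the two coordinates $t_a$ and $t_b$, while checking separately that adjoining either relation preserves admissibility. Since $a$ and $b$ are non-comparable in $X$, the defining inequalities of $\Delta_X$ (namely $t_x<t_y$ whenever $x>y$) impose no direct constraint relating $t_a$ and $t_b$. Hence, up to the measure-zero diagonal $\{t_a=t_b\}$, I would split
\[
\Delta_X=\bigl(\Delta_X\cap\{t_b<t_a\}\bigr)\sqcup\bigl(\Delta_X\cap\{t_a<t_b\}\bigr).
\]
Because the label map $\delta_X$ is unchanged when relations are adjoined, the integrand $\prod_{x\in X}\tx_{\delta_X(x)}(t_x)$ is literally the same function for $X$, $X^b_a$ and $X^a_b$; so the whole proposition reduces to identifying these two pieces with $\Delta_{X^b_a}$ and $\Delta_{X^a_b}$, respectively. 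Here $X^b_a$ carries the relation $a<b$, i.e. $b>a$, forcing $t_b<t_a$, while $X^a_b$ forces $t_a<t_b$.

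First I would verify admissibility. Adjoining $a<b$ and passing to the transitive closure can only enlarge the set of order relations, so any element lying strictly above another in $X^b_a$ already does so in $X$; consequently the maximal elements of $X^b_a$ form a subset of the maximal elements of $X$, and likewise the minimal elements. Since the admissibility conditions $\delta_X(x)\ne 0$ on maximal elements and $\delta_X(x)\ne 1,\pm 2$ on minimal elements are universally quantified over these extremal elements, and the labels are not altered, admissibility of $X$ descends to $X^b_a$, and by the symmetric argument to $X^a_b$.

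The heart of the matter is to show $\Delta_{X^b_a}=\Delta_X\cap\{t_b<t_a\}$ exactly. The inclusion $\subseteq$ is immediate, since $X^b_a$ contains every relation of $X$ together with $b>a$. For the reverse inclusion I must check that all inequalities produced by the transitive closure are automatically respected: a strict relation $v>u$ holding in $X^b_a$ but not in $X$ can arise only because $v\ge b$ and $a\ge u$ in $X$ together with the adjoined edge $a<b$. For any $(t_x)\in\Delta_X$ with $t_b<t_a$ one then has $t_v\le t_b$ (as $v\ge b$) and $t_u\ge t_a$ (as $u\le a$), whence $t_v\le t_b<t_a\le t_u$, so $t_v<t_u$ holds. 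Thus no constraint beyond $t_b<t_a$ is imposed, the two domains coincide, and symmetrically $\Delta_{X^a_b}=\Delta_X\cap\{t_a<t_b\}$. Integrating the common integrand over the decomposition and discarding the null diagonal yields $I_z(X)=I_z(X^b_a)+I_z(X^a_b)$.

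The main obstacle is precisely the transitivity verification of the last paragraph: one must be certain that adjoining $a<b$ cuts out, through its transitive consequences, exactly the half-space $\{t_b<t_a\}$ and nothing more, i.e. that every secondary inequality coming from a chain through the new edge is already forced by the inequalities defining $\Delta_X$ together with $t_b<t_a$. Everything else — the admissibility bookkeeping and the measure-zero discard of $\{t_a=t_b\}$ — is routine. This is the $5$-poset analogue of Yamamoto's shuffle decomposition from \cite{Yamamoto2014}, and the same structure of proof applies once the integral forms $\tx_{\delta}$ of the present definition are substituted.
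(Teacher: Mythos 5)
Your proof is correct: the paper states this proposition without proof, presenting it as the direct $5$-poset analogue of Yamamoto's shuffle decomposition for $2$-posets \cite{Yamamoto2014}, and your argument is exactly that standard one — split $\Delta_X$ along the null diagonal $\{t_a=t_b\}$, note the integrand is unchanged, and identify the two pieces with $\Delta_{X^b_a}$ and $\Delta_{X^a_b}$. You also correctly handle the two points the paper leaves tacit: the orientation convention ($x>y$ forces $t_x<t_y$, so adjoining $a<b$ cuts out $\{t_b<t_a\}$), and the transitive-closure check that relations $u\le a$, $b\le v$ produce only inequalities $t_v\le t_b<t_a\le t_u$ already implied on the half-space, together with the observation that adjoining relations can only shrink the sets of maximal and minimal elements, so admissibility is preserved.
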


Note that the admissibility of a $5$-poset corresponds to
the convergence of the associated integral. We use the Hasse diagrams to indicate $5$-posets,
with vertices $\circ$ and ``$\bullet\ \sigma$" corresponding to $\delta(x)=0$ and
$\delta(x)=\sigma\ (\sigma\in\{\pm 1\})$, respectively. For convenience, we
replace ``$\bullet\ 1$" by $\bullet$  and
replace ``$\bullet\ -1$" (resp. ``$\bullet\ -2$'') by ``$\bullet\ {\bar1}$'' (resp. $\bullet\ {\bar2}$'').
For example, the diagram
\[\begin{xy}
{(0,-4) \ar @{{*}-o} (4,0)},
{(4,0) \ar @{-{*}} (8,-4)},
{(8,-4) \ar @{-o}_{\bar 1} (12,0)},
{(12,0) \ar @{-o} (16,4)},
{(16,4) \ar @{-{*}} (24,-4)},
{(24,-4) \ar @{-o}_{\bar 2} (28,0)},
{(28,0) \ar @{-o} (32,4)}
\end{xy} \]
represents the $5$-poset $X=\{x_1,x_2,x_3,x_4,x_5,x_6,x_7,x_8\}$ with order
$x_1<x_2>x_3<x_4<x_5>x_6<x_7<x_8$ and label
$(\delta_X(x_1),\dotsc,\delta_X(x_8))=(1,0,-1,0,0,-2,0,0)$. For composition $\bfk=(k_1,\dotsc,k_r)$ and $\bfsi\in\{\pm 1,\pm 2\}^r$ (admissible or not),
we write
\begin{equation*}
\begin{xy}
{(0,-3) \ar @{{*}.o} (0,3)},
{(1,-3) \ar @/_1mm/ @{-} _{(\bfk,\bfsi)} (1,3)}
\end{xy}
\end{equation*}
for the `totally ordered' diagram:
\[\begin{xy}
{(0,-24) \ar @{{*}-o}_{\sigma_r} (4,-20)},
{(4,-20) \ar @{.o} (10,-14)},
{(10,-14) \ar @{-} (14,-10)},
{(14,-10) \ar @{.} (20,-4)},
{(20,-4) \ar @{-{*}} (24,0)},
{(24,0) \ar @{-o}_{\sigma_{1}}(28,4)},
{(28,4) \ar @{.o} (34,10)},
{(0,-23) \ar @/^2mm/ @{-}^{k_r} (9,-14)},
{(24,1) \ar @/^2mm/ @{-}^{k_{1}} (33,10)},
\end{xy} \]
If $k_i=1$, we understand the notation $\begin{xy}
{(0,-5) \ar @{{*}-o}_{\sigma_i} (4,-1)},
{(4,-1) \ar @{.o} (10,5)},
{(0,-4) \ar @/^2mm/ @{-}^{k_i} (9,5)}
\end{xy}$ as a single $\bullet\ {\sigma_i}$.
We see from \eqref{equ:glInteratedInt}
\begin{align}\label{Eq-MPL}
I_z\left(\ \begin{xy}
{(0,-3) \ar @{{*}.o} (0,3)},
{(1,-3) \ar @/_1mm/ @{-} _{({\bfk},\bfsi)} (1,3)}
\end{xy}\right)=\Li_{k_1,\dotsc,k_r}(\sigma_1 z,\sigma_1\sigma_2,\dotsc,\sigma_{r-1}\sigma_r),
\end{align}
where $(\sigma_1,\ldots,\sigma_r)\in\{\pm 1\}^r$.

It is clear that all multiple associated integral $I_z(\cdot)$ can be expressed in terms of multiple polylogarithm function.

\begin{thm}\label{thm-GMZSBVs} For nonnegative $k$ and positive integers $m_1,m_2,\ldots,m_p$ and real $x\in [0,1]$,
\begin{align}\label{xxxeq:MT}
&\sum_{n=1}^\infty \frac{\zeta_n^\star(\bfm^\vee;x)}{n^{k+2}4^n }\binom{2n}{n}
-\sum_{j=1}^p (-1)^{p-j} \Li_{\ola\bfm_{j+1}}(1-x)\sum_{n=1}^\infty \frac{\zeta_n^\star(\ora\bfm_j^\vee)}{n^{k+2}4^n }\binom{2n}{n}\nonumber\\
&\quad +(-1)^p \sum_{i+j=k,\atop i\geq1,j\geq 0} (-1)^i \left\{\sum_{n=1}^\infty \frac{\binom{2n}{n}}{n^{j+2}4^n} \right\}\Li_{\ola\bfm_+,\{1\}_{i-1}}(1-x)\nonumber\\
&=c_1\sum_{i+j=k,\atop i,j\geq 0}(-1)^iI_{\sqrt{1-x}}
\left(\text{\raisebox{36pt}{$
\begin{xy}
{(-8,-23) \ar @{{*}-o}^{\bar 2} (-8,-23)},
{(-8,-23) \ar @{-o} (-8,-18)},
{(-8,-18) \ar @{{*}-o}^{\bar 2} (-8,-18)},
{(-8,-18) \ar @{{*}.o} (-8,-13)},
{(-8,-13) \ar @{{*}-o}^{\bar 2} (-8,-13)},
{(-8,-13) \ar @{-o} (-8,-8)},
{(-8,-8) \ar @{{*}-o}^{\bar 2} (-8,-8)},
{(-8,-8) \ar @{-o} (-4,-13)},
{(-4,-13) \ar @{{*}-o}^{\bar 1} (-4,-13)},
{(-8,-8) \ar @{-o} (0,-4)},
{(8,-18) \ar @{{*}-o} (8,-13)},
{(8,-18) \ar @{{*}-o}_{\bar 2} (8,-18)},
{(8,-13) \ar @{{*}-o}_{\bar 2} (8,-13)},
{(8,-13) \ar @{.o} (8,-8)},
{(8,-8) \ar @{{*}-o}_{\bar 2} (8,-8)},
{(8,-8) \ar @{{*}-o} (0,-4)},
{(0,-4) \ar @{-o} (0,1)},
{(-5,-23) \ar @{} (5,-23)},
{(-7,-23) \ar @/_1mm/ @{-}_{j} (-7,-13)},
{(7,-18) \ar @/^1mm/ @{-}^{i} (7,-8)},
{(1,-4) \ar @/_1mm/ @{-}_{(\ola\bfm,{\bar 2}_{p}) } (1,1)}
\end{xy}$}} \right)
+c_2\sum_{i+j=k,\atop i,j\geq 0}(-1)^iI_{1-x}
\left(\ \text{\raisebox{33pt}{$
\begin{xy}
{(-8,-23) \ar @{{*}-o} (-8,-23)},
{(-8,-23) \ar @{-o} (-8,-18)},
{(-8,-18) \ar @{{*}-o} (-8,-18)},
{(-8,-18) \ar @{{*}.o} (-8,-13)},
{(-8,-13) \ar @{{*}-o} (-8,-13)},
{(-8,-13) \ar @{-o} (-8,-8)},
{(-8,-8) \ar @{{*}-o} (-8,-8)},
{(-8,-8) \ar @{-o} (0,-4)},
{(8,-18) \ar @{{*}-o} (8,-13)},
{(8,-18) \ar @{{*}-o} (8,-18)},
{(8,-13) \ar @{{*}-o} (8,-13)},
{(8,-13) \ar @{.o} (8,-8)},
{(8,-8) \ar @{{*}-o} (8,-8)},
{(8,-8) \ar @{{*}-o} (0,-4)},
{(0,-4) \ar @{-o} (0,1)},
{(-5,-23) \ar @{} (5,-23)},
{(-7,-23) \ar @/_1mm/ @{-}_{j} (-7,-13)},
{(7,-18) \ar @/^1mm/ @{-}^{i} (7,-8)},
{(1,-4) \ar @/_1mm/ @{-} _{(\ola\bfm,{1}_{p}) } (1,1)}
\end{xy}$}} \right),
\end{align}
where $c_1:=(-1)^p2^{|{\bf m}|+2-p}$, $c_2:=(-1)^p2\log(2)$, and\ \
$\begin{xy}
{(0,-3) \ar @{{o}.o} (0,3)},
{(1,-3) \ar @/_1mm/ @{-} _{(\bfm,\bfsi)} (1,3)}
\end{xy}$
represents the Hasse diagram obtained from \ \
$\begin{xy}
{(0,-3) \ar @{{*}.o} (0,3)},
{(1,-3) \ar @/_1mm/ @{-} _{(\bfm,\bfsi)} (1,3)}
\end{xy}$
by replacing its unique minimum $\bullet$ by $\circ$.
\end{thm}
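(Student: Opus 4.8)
The plan is to re-run the argument used for Theorem~\ref{EATMZV1}, but starting from the star--duality integral \eqref{IMP-2} and carrying an extra weight $1/n^{k+1}$, and then to read the resulting iterated integral as a multiple integral attached to a forked $5$-poset. First I would multiply \eqref{IMP-2} by $\binom{2n}{n}/(n^{k+2}4^n)$ and sum over $n\ge 1$. The left-hand side reproduces the series difference in \eqref{xxxeq:MT}; on the right the factor $n$ in \eqref{IMP-2} cancels one power of $n$, so the innermost weight becomes the central-binomial generating function $\frac{1}{t}\,g_{k+1}(t)$, where $g_a(t):=\sum_{n\ge1}\binom{2n}{n}t^n/(n^{a}4^n)$ satisfies $g_a(t)=\int_0^t g_{a-1}(s)\,ds/s$ with $g_1(t)=\int_0^t ds/(1-s+\sqrt{1-s})$ by \eqref{CB1}. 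Thus the right-hand side becomes $(-1)^p\int_x^1 \tx_1^{m_p-1}\tx_0\cdots\tx_1^{m_1}\tx_0\,\frac{1}{t} g_{k+1}(t)\,dt$, in which the inner integral defining $g_{k+1}$ runs from the base point $0$ while the outer chain runs over $[x,1]$; it is exactly this mismatch of base points that turns the expression into a non-totally-ordered multiple integral, i.e.\ an integral over a $5$-poset in the sense of the present section.

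Next I would decompose $g_1(t)=2\log\!\bigl(2/(1+\sqrt{1-t})\bigr)=2\log2-2\log(1+\sqrt{1-t})$. The constant $2\log2$, propagated through the remaining $\tx_0$-integrations, is responsible for the coefficient $c_2=(-1)^p2\log2$ and for the $\{1\}$-strings, while the piece $-2\log(1+\sqrt{1-t})$, after the substitutions $t\mapsto1-t$ and $t\mapsto t^2$ already used in the proof of Theorem~\ref{EATMZV1}, introduces the square-root upper limit $\sqrt{1-x}$, the $\bar2$-labels, and the coefficient $c_1=(-1)^p2^{|\bfm|+2-p}$ accumulated from the form conversions $\tx_0\mapsto\tx_{-2}=2t\,dt/(1-t^2)$. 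The split of the $k$ copies of $\tx_0$ coming from $g_{k+1}$ between the two arms of the fork is precisely the summation over $i+j=k$, and the partial-sum contributions of the recursion $g_a=\int_0^t g_{a-1}\,\tx_0$ --- namely the terms $\bigl\{\sum_n\binom{2n}{n}/(n^{j+2}4^n)\bigr\}\,\Li_{\ola\bfm_+,\{1\}_{i-1}}(1-x)$ --- are the ones I would move to the left-hand side of \eqref{xxxeq:MT}.

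To finish, I would translate everything into the graphical calculus. After $t\mapsto1-t\mapsto t^2$ each main $\tx_0$ becomes $\tx_{-2}=\tx_1+\tx_{-1}$, so the shuffle relation of Proposition~\ref{prop:shuffl3poset} lets me collapse the forked domain --- two chains of lengths $i$ and $j$ hanging below a common vertex and surmounted by the chain $(\ola\bfm,\bar 2_p)$, resp.\ $(\ola\bfm,1_p)$ --- and \eqref{Eq-MPL} converts each totally ordered piece into a multiple polylogarithm with the appropriate $\sigma$-argument. Matching terms then identifies $c_1\sum_{i+j=k}(-1)^iI_{\sqrt{1-x}}(\cdot)$ with the signed multiple-polylogarithm term and $c_2\sum_{i+j=k}(-1)^iI_{1-x}(\cdot)$ with the $2\log2$-term of \eqref{eq:MT}; equivalently, the statement is exactly Theorem~\ref{EATMZV2} with these two families of terms rewritten as $5$-poset integrals.

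I expect the main obstacle to be the poset bookkeeping: checking that the shuffle expansion of the two forked diagrams, weighted by $(-1)^i$ and summed over $i+j=k$, telescopes to the single multiple polylogarithms $\Li_{(\ola\bfm)_+,\{1\}_k}(1-x)$ and $\Li_{(\ola\bfm)_+,\{1\}_{k+1}}(\sigma_1\sqrt{1-x},\dots,-\sigma_{p+k})$ with the correct arguments and signs. This reduces to a binomial/telescoping identity for the all-ones strings produced by the two arms, and the delicate points are the orientation of the branches (the reversed composition $\ola\bfm$ and the $\bfm_+$ increment), the position of the distinguished minimum $\bullet$ versus $\circ$ in the diagrams, and the careful isolation of the boundary terms carried to the left-hand side.
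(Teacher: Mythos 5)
Your proposal is correct and follows essentially the same route as the paper's own proof: the authors likewise start from the duality identity \eqref{IMP-2} (in its once-summed form \eqref{d1}), multiply by $\binom{2n}{n}/(n^{k+2}4^n)$, expand the central-binomial generating function via \eqref{CBP1}--\eqref{CBP2} so that the binomial split of $\log^k$ produces exactly your $i+j=k$ fork and the boundary terms $\bigl\{\sum_n\binom{2n}{n}/(n^{j+2}4^n)\bigr\}\Li_{\ola\bfm_+,\{1\}_{i-1}}(1-x)$ moved to the left-hand side, then decompose $\log\bigl(2/(1+\sqrt{t})\bigr)=\log 2-\log(1+\sqrt{t})$ into the $c_2$- and $c_1$-pieces with the substitutions $t\to 1-t$ and $t\to t^2$, and finally read the resulting two-armed integrals as the displayed $5$-poset integrals via \eqref{Eq-MPL}. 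Your closing observation that the statement amounts to Theorem \ref{EATMZV2} rewritten in poset form is consistent with the paper (the $k=0$ case recovers Corollary \ref{cor-EAS-2}), though for the theorem itself no collapsing via Proposition \ref{prop:shuffl3poset} is needed, since the right-hand side is already stated as poset integrals.
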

\begin{proof} From (\ref{CB1}), by an elementary calculation, we deduce that
\begin{align}\label{CBP1}
\sum_{n=1}^\infty \frac{\binom{2n}{n}}{n^{k+2}4^n}z^n&=2\int_0^z \frac{\log\left(\frac{2}{1+\sqrt{1-t}} \right)}{t}dt\, \tx_0{k}\nonumber\\
&=\frac{2}{k!} \int_0^z \frac{\log\left(\frac{2}{1+\sqrt{1-t}} \right)\log^k\left(\frac{z}{t}\right)}{t}dt\nonumber\\
&=2\sum_{i+j=k,\atop i,j\geq 0} \frac{\log^i(z)}{i!j!} \int_0^z \frac{\log\left(\frac{2}{1+\sqrt{1-t}} \right)\log^j\left(\frac{1}{t}\right)}{t}dt.
\end{align}
Further, applying $t\to 1-t$ and using the special case $z=1$ in \eqref{CBP1} we calculate that
\begin{align}\label{CBP2}
\sum_{n=1}^\infty \frac{\binom{2n}{n}}{n^{k+2}4^n}\Big(1-(1-z)^n\Big)
&=2\sum_{i+j=k,\atop i,j\geq 0} (-1)^j\frac{\log^i(1-z)}{i!j!} \int_0^z\frac{\log\left(\frac{2}{1+\sqrt{t}}\right)\log^j(1-t)}{1-t}dt\nonumber\\&\quad-\sum_{i+j=k,\atop i\geq 1,j\geq 0} \left\{\sum_{n=1}^\infty \frac{\binom{2n}{n}}{n^{j+2}4^n} \right\}\frac{\log^i(1-z)}{i!}.
\end{align}
Multiplying (\ref{d1}) by $\frac{\binom{2n}{n}}{4^n n^{k+2}}$ and applying (\ref{CBP2}), by a direct calculation, we have
\begin{align*}
&\sum_{n=1}^\infty \frac{\zeta_n^\star(\bfm^\vee;x)}{n^{k+2}4^n }\binom{2n}{n}
-\sum_{j=1}^p (-1)^{p-j} \Li_{\ola\bfm_{j+1}}(1-x)\sum_{n=1}^\infty \frac{\zeta_n^\star(\ora\bfm_j^\vee)}{n^{k+2}4^n }\binom{2n}{n}\nonumber\\
&=-(-1)^p 2^{|\bfm|+2} \sum_{i+j=k,\atop i,j\geq 0} \frac{(-1)^j}{i!j!} \int_0^{\sqrt{1-x}} \frac{\log(1+t)\log^j(1-t^2)}{1-t^2}tdt \\
&\quad\quad\quad\quad\quad\quad\quad\quad\quad\quad\quad\quad\quad\quad\quad\quad \sha
\left(\frac{\log^i(1-t^2)}{t}dt \, \tx_0^{m_1-1}
\frac{tdt}{1-t^2}\tx_0^{m_2-1}\cdots \frac{tdt}{1-t^2}\tx_0^{m_p-1}\right)\\
&\quad+(-1)^p 2\log(2)\sum_{i+j=k,\atop i,j\geq 0} \frac{(-1)^j}{i!j!} \int_0^{{1-x}} \frac{\log^j(1-t)}{1-t}dt
\sha
\left(\frac{\log^i(1-t)}{t}dt \, \tx_0^{m_1-1} \tx_1\tx_0^{m_2-1}\cdots \tx_1\tx_0^{m_p-1}\right)\\
&\quad-(-1)^p \sum_{i+j=k,\atop i\geq1,j\geq 0} (-1)^i \left\{\sum_{n=1}^\infty \frac{\binom{2n}{n}}{n^{j+2}4^n} \right\}\frac1{i!} \int_0^{1-x} \frac{\log^i(1-t)}{t}dt\tx_0^{m_1-1} \, \tx_1\tx_0^{m_2-1}\cdots \tx_1\tx_0^{m_p-1}.
\end{align*}
Finally, according to the definition of multiple associated integral $I_z(\cdot)$
we obtain the desired evaluation using \eqref{Eq-MPL}.
\end{proof}

Letting $k=0$ in \eqref{xxxeq:MT} yields the Corollary \ref{cor-EAS-2}.

{\bf Acknowledgments.}  The first author is supported by the National Natural Science Foundation of China (Grant No. 12101008), the Natural Science Foundation of Anhui Province (Grant No. 2108085QA01) and the University Natural Science Research Project of Anhui Province (Grant No. KJ2020A0057). Jianqiang Zhao is supported by Jacobs Prize from The Bishop's School.

\end{document}